\theoremstyle{plain}
\newtheorem{theorem}{Theorem}[section]
\newtheorem{proposition}[theorem]{Proposition}
\newtheorem{lemma}[theorem]{Lemma}
\newtheorem{corollary}[theorem]{Corollary}
\newtheorem{conjecture}[theorem]{Conjecture}
\theoremstyle{definition}
\newtheorem{remark}[theorem]{Remark}
\theoremstyle{remark}
\newtheorem{acknowledgments}{Acknowledgments}
\renewcommand{\d}{{\bf d}}
\renewcommand{\det}{{\rm det}}
\begin{document}
\title[Hodge-Stickelberger polygons for Laurent polynomials $P(x^s)$]{%
Hodge-Stickelberger polygons for $L$-functions of exponential sums of $P(x^s)
$}
\author{R\'egis Blache, \'Eric F\'erard and Hui June Zhu}
\address{Laboratoire AOC, IUFM de la Guadeloupe,
97139 Les Abymes
}
\email{rblache@iufm.univ-ag.fr}
\address{ Laboratoire GAATI, Universit\'e de la Polyn\'esie Fran\c{c}aise,
Tahiti}
\email{ferard@upf.pf}
\address{Department of mathematics, State University of New York, Buffalo,
NY 14260-2900, USA}
\email{zhu@cal.berkeley.edu}
\date{September 18th, 2007}
\keywords{Newton polygon, Hodge polygon, Hodge-Stickelberger polygon, 
$L$-function, exponential sums, twisted exponential sums, zeta function of
Artin-Schreier curves, Dwork trace formula, Monsky-Reich trace formula}
\subjclass[2000]{11,14}

\begin{abstract}
Let ${\mathbb{F}}_q$ be a finite field of cardinality $q$ and characteristic 
$p$. Let $\overline{P}(x)$ be any one-variable Laurent polynomial over ${%
\mathbb{F}}_q$ of degree $(d_1,d_2)$ respectively and $p\nmid d_1d_2$. For
any fixed $s\geq 1$ coprime to $p$, we prove that the $q$-adic Newton
polygon of the $L$-functions of exponential sums of $\overline{P}(x^s)$ has
a tight lower bound which we call Hodge-Stickelberger polygon, depending
only on the $d_1,d_2,s$ and the residue class of $(p\bmod s)$. This
Hodge-Stickelberger polygon is a certain weighted convolution of the Hodge
polygon for $L$-function of exponential sums of $\overline{P}(x)$ and the
Newton polygon for the $L$-function of exponential sums of $x^s$ (which is
precisely given by the classical Stickelberger theory). We have an analogous
Hodge-Stickelberger lower bound for multivariable Laurent polynomials as
well.

For any $\nu\in({\mathbb{Z}}/s{\mathbb{Z}})^\times$, we show that there
exists a Zariski dense open subset $ {\mathcal{U}} _\nu$ defined over ${
\mathbb{Q}}$ such that for every Laurent polynomial $P$ in $ {\mathcal{U}}
_\nu(\overline{\mathbb{Q}})$ the $q$-adic Newton polygon of $L(\overline{P}
(x^s)/{\mathbb{F}}_q;T)$ converges to the Hodge-Stickelberger polygon as $p$
approaches infinity and $p\equiv \nu\bmod s$.

As a corollary, we obtain a tight lower bound for the $q$-adic Newton
polygon of the numerator of the zeta function of an Artin-Schreier curve
given by affine equation $y^p-y=\overline{P}(x^s)$. This estimates the $q$%
-adic valuations of reciprocal roots of the zeta function of the
Artin-Schreier curve.
\end{abstract}

\maketitle

\section{Introduction}

\label{S:1}

Let ${\mathbb{A}}_{d_1,d_2}$ be the space of all Laurent polynomials in one
variable $x$ of degree $(d_1,d_2)$ (in $x$ and $x^{-1}$ respectively) where $%
d_1,d_2\geq 1$. It is a rational function with two poles at $\infty$ and $0$%
. The one-pole polynomial case (i.e., $d_2=0$) will also be considered along
the line. For our purpose, we may assume that each Laurent polynomial is
monic at $x^{d_1}$ and hence the coefficient space ${\mathbb{A}}_{d_1,d_2}$
is an irreducible affine space of dimension $d_1+d_2$. In this paper $p$ is
a prime coprime to $d_1d_2$. Let $E(x)$ be the Artin-Hasse exponential
function, namely, $E(x)=\exp(\sum_{i=0}^{\infty}x^{p^i}/p^i)$. Let $\gamma$
be a $p$-adic root of $\log(E(x))$ in the algebraic closure of ${\mathbb{Q}}%
_p$ with $ \mathrm{ord} _p\gamma = 1/(p-1)$. Then $E(\gamma)$ is a primitive 
$p$-th root of unity, which we fix for the rest of the paper and denote it
by $\zeta_p$.

Let $a$ be a positive integer and $q=p^a$. Let $\overline{P}(x)$ be a
rational function on the projective line with two poles of order $d_1$ and $%
d_2$ respectively. Up to an isomorphism over $\overline{\mathbb{F}}_p$ we
may assume the poles are at $\infty$ and $0$ and write 
\begin{equation*}
\overline{P}(x)=\sum_{i=-d_2}^{d_1}\overline{a}_i x^i 
\end{equation*}
where $\overline{a}_i$ lies in ${\mathbb{F}}_q$ and $\overline{P}\in {%
\mathbb{A}}_{d_1,d_2}({\mathbb{F}}_q)$. For any positive integer $k$, let $%
\psi_{q^k}:{\mathbb{F}}_{q^k}\rightarrow {\mathbb{Q}}(\zeta_p)^\times$ be a
nontrivial additive character of ${\mathbb{F}}_{q^k}$ and we fix $%
\psi_{q^k}(\cdot)=\zeta_p^{ \mathrm{Tr} _{{\mathbb{F}}_{q^k}/{\mathbb{F}}%
_p}(\cdot)}$. The $k$-th exponential sum of $\overline{P}(x)\in{\mathbb{F}}%
_q[x,x^{-1}]$ is $S_k(\overline{P})=\sum_{x\in{\mathbb{F}}%
_{q^k}^\times}\psi_{q^k}(\overline{P}(x))$. The $L$-function of the
exponential sum of $\overline{P}$ is defined by 
\begin{equation*}
L(\overline{P}(x); T) = \exp(\sum_{k=1}^{\infty}S_k(\overline{P})\frac{T^k}{k%
}). 
\end{equation*}
It is known that 
\begin{equation*}
L(\overline{P}(x)/{\mathbb{F}}_q;T)=1+b_1T+\cdots+b_{d_1+d_2}T^{d_1+d_2} \in 
{\mathbb{Z}}[\zeta_p][T]. 
\end{equation*}
The most important information about the $L$-function is its reciprocal
roots. They are Weil $q$-numbers, i.e., algebraic integers all Galois
conjugates are of absolute value $\sqrt{q}$. This paper concerns their $q$%
-adic absolute value. This can be effectively studied in terms of $q$-adic
Newton polygon of the $L$-function. The $q$-adic Newton polygon $ \mathrm{NP}
_q(\overline{P}(x);{\mathbb{F}}_q)$ of this $L$-function is defined as the
lower convex hull of the points $(i, \mathrm{ord} _q(b_i))_{i\geq 0}$ on the 
$(x,y)$-plane. Results about this Newton polygon can be found in \cite%
{Wan:1, Zhu:1, Zhu:2}. This polygon is independent of the choice of base
field ${\mathbb{F}}_q$ in $\overline{\mathbb{F}}_p$ (even though the
reciprocal roots of the $L$-function do depend on ${\mathbb{F}}_q$). The
relation between $q$-adic valuation of roots of a polynomial and its $q$%
-adic Newton polygon is explained in details in \cite[Chapter IV]{Koblitz}.

We fix once and for all a positive integer $s\geq 1$. All primes $p$ we
consider will be assumed prime to $s$. The main subject of study of this
paper is $L(\overline{P}(x^s)/{\mathbb{F}}_q;T)$ and its reciprocal roots.
Let $\sigma$ be the permutation on the set $\{0,\dots,s-1\}$ induced by
multiplication of $p$ modulo $s$. We write its cycle decomposition $%
\sigma=\prod_{i=1}^{u} \sigma_i$ for $\ell_i$-cycles $\sigma_i$ (including $1
$-cycles). Let 
\begin{equation*}
\lambda_i:=\frac{\sum_{j\in \sigma_i}j}{ s \ell_i}. 
\end{equation*}
So $0\leq \lambda_i< 1$. Note that $\ell_i$ and $\lambda_i$ are invariants
depending only on $s$, $\nu$ (defined as the least residue of $p$ modulo $s$%
) and the cycle $\sigma_i$, but independent of $p$. See Section \ref{S:3.1}
for more details. Note that for $s|(q-1)$ one recovers the classical formula 
$\lambda_i = \frac{s_p((q-1)r/s)}{a(p-1)},$ where $s_p(n)$ denotes the sum of 
$p$-adic expansions of the integer $n$.

We now define $ \mathrm{HS} ({\mathbb{A}}_{d_1,d_2},\nu,s)$, the \emph{%
Hodge-Stickelberger} polygon of $L(\overline{P}(x^s)/{\mathbb{F}}_q;T)$, as
the polygon with line segments of slopes and lengths 
\begin{eqnarray}  \label{E:HS}
(\frac{m+1-\lambda_i}{d_1}, \ell_i)_{1\leq i\leq u,0\leq m\leq d_1-1}; \quad
(\frac{m+\lambda_i}{d_2},\ell_i)_{1\leq i\leq u, 0\leq m\leq d_2-1}.
\end{eqnarray}
Note that this polygon contains segments $(0,1)$ and $(1,1)$, and it is
symmetric in the sense that for every slope $\alpha$ there is a slope $%
1-\alpha$ of equal length (note that if $\sigma_i$ is the cycle containing $%
r>0$, $\sigma_j$ the one containing $s-r$, then $\lambda_i+\lambda_j=1$).
This polygon depends only on $d_1,d_2,\nu,s$ and is of total horizontal
length $s(d_1+d_2)$.

If $d_2=0$ then the Hodge-Stickelberger polygon is given by the first half
of the line segments in (\ref{E:HS}) minus the segment $(1,1)$.

\begin{remark}
\label{R:2} Consider the Gauss sum over ${\mathbb{F}}_q$ defined by 
\begin{equation*}
G_{{\mathbb{F}}_q}(\psi_q,\chi_s^r):= - \sum_{x\in {\mathbb{F}}_q^\times}
\psi_q(x)\chi_s^{-r}(x)
\end{equation*}
(where $\chi_s$ is a multiplicative character of order $s$ on ${\mathbb{F}}%
_{q}^\times$). The Stickelberger's theorem ( see \cite[Theorem 11.2.1]{B-E-W}
or \cite{Washington:cyclo}) says that $ \mathrm{ord} _q(G_{{\mathbb{F}}%
_q}(\psi_q,\chi_{s}^i))= \lambda_{i}$. In fact, one can show that $L(x^s;{%
\mathbb{F}}_p) = \prod_i(1-T^{\ell_i} G_{{\mathbb{F}}_{p^{\ell_i}}}(\psi_{p^{\ell_i}},%
\chi_{s}^i))$ where $i$ ranges over all distinct cycles in $\sigma$ (see 
\cite{Katz:79} or \cite{Wan:2}). Thus the exact shape of the $p$-adic Newton
polygon of $L(x^s;{\mathbb{F}}_p)$ consists of line segments $%
(\lambda_i,\ell_i)_{2\leq i\leq u}$ (by omitting the 1-cycle $\sigma_1=(0)$).
\end{remark}

By the remark above, our Hodge-Stickelberger polygon can be considered as a
weighted convolution of the Hodge polygon $ \mathrm{HP} ({\mathbb{A}}%
_{d_1,d_2})$ of the $L$-function $L(\overline{P}/{\mathbb{F}}_q;T)$ 
(see \cite{Li-Zhu:1} for details)
and the Newton polygon of $L(x^s/{\mathbb{F}}_q;T)$. The following theorem states
that it gives a lower bound of the $q$-adic Newton polygon of $L$-function.
We use $\succ$ to denote one polygon lies over the next one and their
endpoints meet.

\begin{theorem}
\label{T:1} For any Laurent polynomial $\overline{P} \in {\mathbb{A}}%
_{d_1,d_2}({\mathbb{F}}_q)$, we have 
\begin{equation*}
\mathrm{NP} _q(\overline{P}(x^s);{\mathbb{F}}_q) \succ  \mathrm{HS} ({%
\mathbb{A}}_{d_1,d_2},\nu,s).
\end{equation*}
These two polygons coincide if and only if $p\equiv 1\bmod  \mathrm{lcm }
(sd_1,sd_2)$.
\end{theorem}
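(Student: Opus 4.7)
The plan is to exploit the $\mu_s$-equivariance of $\overline{P}(x^s)$ under $x \mapsto \zeta x$ (with $\zeta^s = 1$) in order to decompose the $L$-function into twisted pieces, and then bound each piece via a Monsky--Reich style trace formula. Using the character identity $\#\{x \in \mathbb{F}_{q^k}^\times : x^s = y\} = \sum_{\chi^s = 1}\chi(y)$, one rewrites
$$ S_k(\overline{P}(x^s)) = \sum_{\chi^s = 1}\;\sum_{y\in \mathbb{F}_{q^k}^\times}\chi(y)\,\psi_{q^k}(\overline{P}(y)). $$
Grouping the multiplicative characters of order dividing $s$ by their Frobenius orbits---whose data is precisely encoded by the cycles $\sigma_i$ of $\sigma$---gives a factorization
$$ L(\overline{P}(x^s)/\mathbb{F}_q; T) \;=\; \prod_{i=1}^{u} L\!\left(\overline{P},\chi_i / \mathbb{F}_{q^{\ell_i}};\;T^{\ell_i}\right), $$
where $\chi_i$ is a representative of the $i$-th orbit.

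Next I would apply the Dwork--Monsky--Reich trace formula to each twisted factor. Lifting $\overline{P}$ to a Laurent polynomial $P$ over $W(\mathbb{F}_q)$ via Teichm\"uller representatives and using the Artin--Hasse splitting function, the twisted $L$-function becomes $\det(1 - T\alpha_{\chi_i})$ for a completely continuous Frobenius operator $\alpha_{\chi_i}$ on a $p$-adic Banach module $\mathcal{L}_{\chi_i}$ of convergent Laurent series. The natural basis of $\mathcal{L}_{\chi_i}$ has the form $\{\pi^{w_{\chi_i}(m)} x^m\}_{m \in \mathbb{Z}}$, where $w_{\chi_i}$ combines the standard Dwork weights with a $\chi_i$-twist shift of valuation $\lambda_i$ arising from the Gauss sum (Remark~\ref{R:2}). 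Explicitly, one expects $w_{\chi_i}(m) = (m + 1 - \lambda_i)/d_1$ for $m \geq 0$ and $w_{\chi_i}(-m) = (m + \lambda_i)/d_2$ for $m \geq 1$, matching the slopes in (\ref{E:HS}).

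Mazur's classical minor-determinant argument then shows that the $q^{\ell_i}$-adic Newton polygon of $\det(1-T\alpha_{\chi_i})$ lies above the Hodge polygon defined by $w_{\chi_i}$; rescaling to $q$-adic valuations adds the multiplicity $\ell_i$ on the horizontal axis. Since the Newton polygon of a product is the Minkowski sum of the Newton polygons of its factors, concatenating these lower bounds reconstructs exactly $\mathrm{HS}(\mathbb{A}_{d_1,d_2}, \nu, s)$, establishing $\mathrm{NP}_q \succ \mathrm{HS}$. For the equality clause, the congruence $p \equiv 1 \bmod \lcm(sd_1, sd_2)$ is precisely the ordinary regime: every cycle is a singleton ($\ell_i = 1$), $\lambda_i = i/s$ for $i = 0, \ldots, s-1$, and the HS slopes form a sorted list of distinct rationals with denominators dividing $sd_1$ or $sd_2$. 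Under this congruence, each twisted Hodge bound is an equality by an explicit Gauss-sum computation, so NP and HS coincide; conversely, any failure of the congruence must force a strict inequality on at least one segment.

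The main obstacle I foresee is twofold. First, one must calibrate the Monsky--Reich basis so that its weights land \emph{exactly} at the slopes of (\ref{E:HS}) in the prescribed order; this requires careful bookkeeping of the Dwork splitting function against the Teichm\"uller lift of $\chi_i$ and the cycle structure of $\sigma$, with the added subtlety that $\sigma$ is defined using $p$ rather than $q$. Second, the ``only if'' half of the equality clause demands showing that whenever $p \not\equiv 1 \bmod \lcm(sd_1, sd_2)$ some convoluted shifted Hodge slope strictly undershoots the corresponding HS slope; I expect this to follow from a combinatorial analysis of how the fractions $\lambda_i$ interact with the denominators $d_1$ and $d_2$, and it is where the genuine combinatorial novelty of the argument likely resides.
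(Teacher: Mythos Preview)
Your strategy is the paper's strategy: decompose $L(\overline{P}(x^s);T)$ into twisted pieces via character orthogonality, bound each twisted piece by its Hodge--Stickelberger polygon using Dwork theory, then recombine. Two points of comparison are worth noting. First, the obstacle you flag about $p$ versus $q$ is exactly what the paper spends Lemma~\ref{L:L-functionsplit} resolving: the Frobenius orbits of characters over $\mathbb{F}_q$ are governed by $\sigma^a$, not $\sigma$, so the correct factorization is $\prod_{i,j} L(\overline{P}/\mathbb{F}_{q^{\ell_i'}},\chi_s^{r_{ij}};T^{\ell_i'})$ with $\ell_i'$ the cycle length under $\sigma^a$; the $\ell_i$ of $\sigma$ reappears only because each $\sigma$-orbit splits into $\ell_i/\ell_i'$ many $\sigma^a$-orbits all sharing the same $\lambda_i$. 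Second, for the twisted bound the paper does not build a $\chi$-twisted basis as you propose, but instead factors the Dwork operator as $\alpha=\alpha_{a-1}\circ\cdots\circ\alpha_0$ along the $p$-adic digits $K_t$ of $(q-1)r/s$ and applies the nuclear-matrix estimates of Section~\ref{S:2}; your Adolphson--Sperber-style route would also work and indeed the paper acknowledges the equivalence in Remark~\ref{R:4}. Finally, the ``only if'' direction you single out as the combinatorial crux is not proved in the paper: it is imported from \cite{Zhu:3}.
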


If $\overline{P}$ has only one pole of order $d_1\geq 1$ (and $d_2=0$), then
the above two polygons coincide if and only if $p\equiv 1\bmod sd_1$ or $%
d_1=1$.

In fact, we have an analogous result for multivariable Laurent polynomial
which is stated in Section \ref{S:6}.

\begin{remark}
\label{R:1} {}From \cite{Zhu:3} we know that $ \mathrm{NP} _q(\overline{P}%
(x^s);{\mathbb{F}}_q)\succ  \mathrm{HP} ({\mathbb{A}}_{sd_1,sd_2})$, the
latter is the concatenation of the following slopes 
\begin{equation*}
0,1,\frac{1}{sd_1},\ldots,\frac{sd_1-1}{sd_1}, \frac{1}{sd_2},\ldots, \frac{%
sd_2-1}{sd_2}
\end{equation*}
in nondecreasing order each of horizontal length $1$. Hence it is of total
horizontal length $s(d_1+d_2)$. We easily see the following relation 
\begin{equation*}
\mathrm{NP} _q(\overline{P}(x^s);{\mathbb{F}}_q)\succ  \mathrm{HS} ({\mathbb{%
A}}_{d_1,d_2},\nu,s)\succ  \mathrm{HP} ({\mathbb{A}}_{sd_1,sd_2}). 
\end{equation*}
Furthermore, $ \mathrm{HS} ({\mathbb{A}}_{d_1,d_2},\nu,s) =  \mathrm{HP} ({%
\mathbb{A}}_{sd_1,sd_2})$ if and only if $\nu=1$, that is $p\equiv 1\bmod s$.
\end{remark}

Then we examine, as $p$ varies, the asymptotic behavior of the polygons $ 
\mathrm{NP} (P(x^s)\bmod  {\mathcal{P}} )$ where $ {\mathcal{P}} $ is a
prime over $p$. Note that this polygon is independent of the choice of $ {%
\mathcal{P}} $ and so for ease of notation we may consider ${\mathbb{F}}_q$
the residue field of $ {\mathcal{P}} $. It is known (see \cite{Li-Zhu:1}, 
\cite{Zhu:2}) that when $p$ approaches infinity, there is a Zariski dense
open subset $ {\mathcal{U}} $ defined over ${\mathbb{Q}}$ of the space of
rational functions with prescribed poles and polar degrees such that for any
rational function lying in $ {\mathcal{U}} (\overline{\mathbb{Q}})$, the
Newton polygon $ \mathrm{NP} (P(x)\bmod  {\mathcal{P}} )$ tends to the
associated Hodge polygon $ \mathrm{HP} ({\mathbb{A}}_{d_1,d_2})$. For $s>2$
such limit does not exist since there is one distinct Hodge-Stickleberger
polygon for each residue class of prime $p$ in $({\mathbb{Z}}/s{\mathbb{Z}}%
)^\times$ and for $p\equiv 1\bmod  \mathrm{lcm } (sd_1,sd_2)$ the Newton
polygon coincides with the Hodge-Stickelberger polygon. See more discussion
on this topic in Section \ref{S:6}. In our main result we show that in each
fixed residue class of primes, the situation is similar to the case $s=1$.

\begin{theorem}
\label{T:2} For every integer $1\leq \nu \leq s-1$ coprime to $s$, there
exists a Zariki dense open subset $ {\mathcal{U}} _\nu$ in ${\mathbb{A}}%
_{d_1,d_2}$ defined over ${\mathbb{Q}}$ where $d_1\geq 1$ and $d_2\geq 0$,
such that for any $P(x)$ lying in $ {\mathcal{U}} _\nu(\overline{{\mathbb{Q}}%
})$, we have 
\begin{equation*}
\lim_{p\rightarrow \infty,p\equiv \nu\bmod s}  \mathrm{NP} (P(x^s)\bmod  {%
\mathcal{P}} ) =  \mathrm{HS} ({\mathbb{A}}_{d_1,d_2},\nu,s)
\end{equation*}
for all primes $ {\mathcal{P}} $ over $p$.
\end{theorem}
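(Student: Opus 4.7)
The plan is to combine Theorem \ref{T:1} with a generic-equality argument in the asymptotic regime $p\to\infty$, $p\equiv\nu\pmod{s}$. Theorem \ref{T:1} already gives $\mathrm{NP}_q(\overline{P}(x^s);\mathbb{F}_q)\succ\mathrm{HS}(\mathbb{A}_{d_1,d_2},\nu,s)$ pointwise for every $\overline{P}$, so it suffices to construct a Zariski dense open $\mathcal{U}_\nu\subset\mathbb{A}_{d_1,d_2}$ defined over $\mathbb{Q}$ on which the reverse comparison holds in the limit.

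First I would decompose $L(\overline{P}(x^s)/\mathbb{F}_q;T)$ into twisted $L$-functions of $\overline{P}(x)$. Orthogonality of the characters of $\mathbb{F}_{q^k}^{\times}$ of order dividing $s$ turns the sum $S_k(\overline{P}(x^s))$ into a sum over such characters $\chi$ of the twisted exponential sums $\sum_{y}\chi(y)\psi_{q^k}(\overline{P}(y))$; organising the characters into Galois orbits under the Frobenius produces a factorization indexed by the cycles $\sigma_i$ of $\sigma$, where $\sigma_i$ contributes a factor of degree $\ell_i(d_1+d_2)$ over $\mathbb{F}_q$ corresponding to $\overline{P}(x)$ twisted by a character $\chi_s^{r_i}$ of order dividing $s$ defined over $\mathbb{F}_{p^{\ell_i}}$. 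The degrees sum to $s(d_1+d_2)$, matching the left-hand side.

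Second I would establish a generic limit theorem for each twisted factor, paralleling the untwisted $s=1$ results of \cite{Li-Zhu:1, Zhu:2}. Applying the Dwork-Monsky-Reich trace formula to the $\chi_s^{r_i}$-twisted Frobenius operator on a suitable Monsky-Washnitzer-style $p$-adic Banach space expresses the coefficients of the $i$-th factor as explicit $p$-adic analytic functions of $\bar a_{-d_2},\dots,\bar a_{d_1}$. Stickelberger's theorem (Remark \ref{R:2}) pins down the exact $q$-adic valuation $\lambda_i$ of the twisting Gauss sum entering this operator, which shifts the Hodge polygon of $\overline{P}(x)$ and yields a twisted Hodge lower bound with slopes exactly $\{(m+1-\lambda_i)/d_1\}_{m=0}^{d_1-1}\cup\{(m+\lambda_i)/d_2\}_{m=0}^{d_2-1}$; concatenating over $i$ reproduces (\ref{E:HS}). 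At each vertex of the twisted polygon the leading $p$-adic term of the corresponding Fredholm coefficient is, for $p$ large with $p\equiv\nu\pmod{s}$, a polynomial $\Phi_{\nu,i,j}$ in the $\bar a_k$'s with coefficients in $\mathbb{Z}[\zeta_s]$; averaging over $\mathrm{Gal}(\mathbb{Q}(\zeta_s)/\mathbb{Q})$ gives defining polynomials over $\mathbb{Q}$ whose simultaneous non-vanishing locus is the desired $\mathcal{U}_\nu$. On $\mathcal{U}_\nu$ each twisted factor attains its Hodge lower bound, and the product realises $\mathrm{HS}(\mathbb{A}_{d_1,d_2},\nu,s)$.

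The principal obstacle is the generic non-vanishing of the $\Phi_{\nu,i,j}$. In the untwisted case the analogous leading polynomial has a clean combinatorial description (a Hasse-Witt-type determinant indexed by Newton polytope data) whose non-triviality is visible by inspection. The $\chi_s^{r_i}$-twist by the non-integral Stickelberger shift $\lambda_i$ distorts the facial structure of the Frobenius matrix, so one has to re-derive the Adolphson-Sperber-Wan facial decomposition of the twisted Frobenius matrix with the $\lambda_i$ built in, identify the twisted leading polynomial, and exhibit a test Laurent polynomial (for instance $\bar a_{d_1}x^{d_1}+\bar a_{-d_2}x^{-d_2}$ with a small perturbation) on which this polynomial evaluates to a non-zero product of Gauss-sum type factors. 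The case $d_2=0$ follows by the same argument after deleting the slope-$1$ contribution.
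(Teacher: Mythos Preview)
Your plan is essentially the paper's own proof: Lemma~\ref{L:L-functionsplit} gives exactly the decomposition into twisted $L$-factors indexed by cycles of $\sigma$, and Proposition~\ref{P:1} is precisely the generic limit theorem for each twisted factor via the Dwork--Monsky--Reich trace formula; Theorem~\ref{T:4} then assembles these by setting $\mathcal{U}_\nu=\bigcap_i\mathcal{U}_{\nu,r_i}$.

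Two points where the paper's execution differs from yours in detail. First, you expect the leading coefficient polynomials $\Phi_{\nu,i,j}$ to lie in $\mathbb{Z}[\zeta_s][\vec a]$ and propose Galois-averaging down to $\mathbb{Q}$; in fact the paper's setup (Section~\ref{S:3.3}) writes the Frobenius matrix entries in $\mathbb{Z}_p[\gamma^{1/d_1},\gamma^{1/d_2}][\vec{\hat a}]$ with no $\zeta_s$ appearing, and the leading term $G_{\nu,r}$ already lies in $\mathbb{Q}[\vec a]$ once the $\gamma$-power is factored out, so no averaging is needed. Second, for the ``principal obstacle'' of non-vanishing you propose evaluating at a test Laurent polynomial; the paper instead invokes the unique-minimal-weight-monomial argument of \cite[Proposition~3.8]{Li-Zhu:1}, which identifies a single monomial in $G_{\nu,r}$ (attached to a specific permutation $\rho_t=\rho_{1,t}\times\rho_{2,t}\in S_{k_1}\times S_{k_2}\subset S_k$) that cannot cancel, so $G_{\nu,r}$ is nonconstant without any specialisation. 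Your test-point idea would also work but is less direct.
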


These theorems about exponential sums have applications to the Zeta function
of Artin-Schreier curves over ${\mathbb{F}}_q$, namely the projective curves 
$C$ defined by affine equation $y^p-y=\overline{P}(x)$ over ${\mathbb{F}}_q$%
. It is well known that all reciprocal roots of the numerator of the Zeta
function of $C$ are eigenvalues of Frobenius endomorphism, and they are Weil 
$q$-numbers. The following corollary estimates the $q$-adic absolute values
of these reciprocal roots. We explore it via the $q$-adic Newton polygon $ 
\mathrm{NP} _q(C/{\mathbb{F}}_q)$, defined as the $q$-adic Newton polygon of
the numerator of the Zeta function of $C$. In this paper a constant $c$
multiple of a polygon means that we amplify or shrink each slope length by a
factor of $c$ horizontally and vertically.

\begin{corollary}
\label{C:1} (i) Let $ \mathrm{NP} (C_s/{\mathbb{F}}_q)$ be the $q$-adic
Newton polygon of the Artin-Schreier curve $C_s: y^p-y=\overline{P}(x^s)$
over ${\mathbb{F}}_q$. Then $\frac{1}{p-1} \mathrm{NP} (C_s/{\mathbb{F}}%
_q)\succ  \mathrm{HS} ({\mathbb{A}}_{d_1,d_2},\nu,s)$. These two polygons
coincide if and only if $p\equiv 1\bmod  \mathrm{lcm } (sd_1,sd_2)$. If $%
\overline{P}$ has only one pole of degree $d_1\geq 1$ (and $d_2=0$) then the
two polygons coincide if and only if $p\equiv 1\bmod sd_1$ or $d_1=1$.

(ii) For every integer $1\leq \nu\leq s-1$ coprime to $s$, there exists a
Zariski dense open subset $ {\mathcal{U}} _\nu$ in ${\mathbb{A}}_{d_1,d_2}$,
defined over ${\mathbb{Q}}$, such that for any $P(x)$ lying in $ {\mathcal{U}%
} _\nu(\overline{{\mathbb{Q}}})$, we have 
\begin{equation*}
\lim_{p\rightarrow \infty,p\equiv \nu\bmod s}\frac{1}{p-1}  \mathrm{NP} (C_s%
\bmod  {\mathcal{P}} ) =  \mathrm{HS} ({\mathbb{A}}_{d_1,d_2},\nu,s) 
\end{equation*}
for any prime $ {\mathcal{P}} $ over $p$.
\end{corollary}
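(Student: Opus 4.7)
The plan is to reduce the corollary to the two preceding theorems via the classical Artin--Schreier decomposition of the zeta function. Counting affine points of $C_s$ with $x\neq 0$ by orthogonality of the additive characters of $\Fp$ yields
\[
\#\{(x,y)\in\F_{q^k}^\times\times\F_{q^k}:y^p-y=\bar{P}(x^s)\}=(q^k-1)+\sum_{t=1}^{p-1}S_k(t\bar{P}(x^s)).
\]
Combined with the contribution of the two totally ramified points above $x=0$ and $x=\infty$ of the smooth projective model --- which under our hypothesis $p\nmid sd_1d_2$ yield only the standard factors $(1-T)$ and $(1-qT)$ in the zeta function --- this produces the identity
\[
P_1(T)=\prod_{t=1}^{p-1}L(t\bar{P}(x^s)/\F_q;T),
\]
where $P_1$ is the numerator of $Z(C_s/\F_q;T)$.

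Next I would observe that all $p-1$ factors on the right have the \emph{same} $q$-adic Newton polygon. Let $\sigma_t\in\Gal(\Q(\zeta_p)/\Q)$ be the automorphism $\zeta_p\mapsto\zeta_p^t$. Since $\psi_{q^k}(tz)=\sigma_t(\psi_{q^k}(z))$, we get $S_k(t\bar{P}(x^s))=\sigma_t(S_k(\bar{P}(x^s)))$, and hence $L(t\bar{P}(x^s)/\F_q;T)=\sigma_t\bigl(L(\bar{P}(x^s)/\F_q;T)\bigr)$ coefficientwise. Because $p$ is totally ramified in $\Q(\zeta_p)$ via the unique prime $(1-\zeta_p)$, this prime is $\sigma_t$-invariant, so $\sigma_t$ preserves $p$-adic (hence $q$-adic) valuations and therefore Newton polygons. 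The Newton polygon of a product being the concatenation of those of its factors, it follows that
\[
\NP(C_s/\F_q)=(p-1)\cdot\NP_q(\bar{P}(x^s);\F_q)
\]
\emph{exactly}; equivalently, $\tfrac{1}{p-1}\NP(C_s/\F_q)=\NP_q(\bar{P}(x^s);\F_q)$ as polygons, not merely up to a lower bound.

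With this identity in hand, both parts follow at once. Part (i) is Theorem \ref{T:1} applied to $\bar{P}(x^s)$, which transfers the lower bound $\tfrac{1}{p-1}\NP(C_s/\F_q)\succ\HS(\A_{d_1,d_2},\nu,s)$ and the stated equality criterion (including the one-pole variant when $d_2=0$). Part (ii) is Theorem \ref{T:2} applied to the same polynomial, using the \emph{same} Zariski dense open set $\cU_\nu$ appearing there. The one place that requires care is the Artin--Schreier decomposition of Step 1: one must verify that the smooth projective model of $C_s$ contributes no boundary factors beyond $(1-T)$ and $(1-qT)$. The coprimality hypothesis $p\nmid sd_1d_2$ makes the pole orders of $\bar{P}(x^s)$ at $0$ and $\infty$ prime to $p$, which is the classical condition under which the Artin--Schreier formula for $P_1(T)$ holds on the nose with no correction terms.
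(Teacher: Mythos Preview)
Your proof is correct and is essentially the argument the paper invokes: the paper's own proof simply says the corollary follows from Theorems~\ref{T:1} and~\ref{T:2} ``by using the same argument as that in \cite[Corollary 1.3]{Zhu:3},'' and what you have written is exactly that standard argument (Artin--Schreier point count via additive-character orthogonality, factorization $P_1(T)=\prod_{t=1}^{p-1}L(t\bar P(x^s)/\F_q;T)$, and Galois conjugacy over $\Q(\zeta_p)$ to conclude all factors share the same $q$-adic Newton polygon). Your care about the totally ramified points over $0$ and $\infty$ under $p\nmid sd_1d_2$ is precisely the condition that makes the factorization clean, so nothing is missing.
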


\begin{proof}
Results in Theorems \ref{T:1} and \ref{T:2} can be translated directly to
this corollary by using the same argument as that in \cite[Corollary 1.3]%
{Zhu:3}.
\end{proof}

\begin{remark}
\label{R:3} We remark that in the above corollary, one may replace the
affine coefficient space ${\mathbb{A}}_{d_1,d_2}$ by the moduli space ${%
\mathcal{AS}}_g$ of Artin-Schreier curves of genus $g:=\frac{(p-1)(d_1+d_2)}{%
2}$ as defined in \cite{PZ:1}.
\end{remark}

We conclude this section by providing general notation and outline for the
rest of this paper. Throughout the entire paper, we fix integers $%
d_1,d_2,s\geq 1$. We consider prime numbers $p$ that are always coprime to $s
$. We always assume the residue field of the prime ideal $ {\mathcal{P}} $
is ${\mathbb{F}}_q$, where $q$ is a $p$-power and we write $q=p^a$. The
permutation $\sigma$ is induced on the set $\{0,\dots,s-1\}$ by
multiplication of $p$ modulo $s$. We always write its cycle decomposition as 
$\sigma=\prod_{i=1}^{u}\sigma_i$ including $1$-cycles. Finally, we denote by 
$E(x)$ be the $p$-adic Artin-Hasse exponential function. Our main theorems %
\ref{T:1} and \ref{T:2} are proved at the end of Section \ref{S:5}. Similar
result on twisted exponential sums is given in Propositions \ref{P:twisted}
and \ref{P:1} of Sections \ref{S:3} and \ref{S:4} respectively. At the end
of the paper in section 6 we discuss some open questions and give statement
of multivariable cases analog of Theorem \ref{T:1}.

\section{Two lemmas about nuclear matrices}

\label{S:2}

To make the proofs of our results as smooth as possible, we summarize some
fringe results here. These results will be employed in Sections \ref{S:3}
and \ref{S:4}. The reader may wish to skip this section at first reading.

Let $K$ be any complete non-Archimedean field with $p$-adic valuation $%
|\cdot|_p$. We refer the readers to \cite{Serre} for basic facts about
Serre's theory of completely continuous maps and Fredholm determinants. For
any $K$-Banach spaces $V$ and $V^{\prime}$ that admit orthonormal basis,
denote by $ {\mathcal{C}} (V,V^{\prime})$ the set of completely continuous $K
$-linear maps from $V$ to $V^{\prime}$. We say that a matrix $M$ over $K$ is 
\emph{nuclear} if there exists a $K$ Banach space $V$ and a $u$ in $ {%
\mathcal{C}} (V,V)$ such that $M$ is the matrix of $u$ with respect to some
orthonormal basis of $V$. If $M=(m_{ij})_{i,j\geq 1}$ is a matrix over $K$,
then $M$ is nuclear if and only if $\lim_{i\rightarrow\infty}{\inf_{j\geq 1} 
\mathrm{ord} _p m_{ij}}=+\infty$.

\begin{lemma}
\label{L:transfer} Let $\vec{M}=(M_0,\, M_1, \cdots, M_{a-1})$ be an $a-$%
tuple of nuclear matrices over ${\mathbb{C}}_p$. Set the block matrix 
\begin{eqnarray*}
\vec{M}_{[a]}:= 
\begin{pmatrix}
0 &  & \cdots & 0 & M_{a-1} \\ 
M_0 & 0 &  &  & 0 \\ 
0 & M_1 & 0 &  & \vdots \\ 
\vdots &  & \ddots & 0 &  \\ 
0 & \cdots & 0 & M_{a-2} & 0%
\end{pmatrix}%
.
\end{eqnarray*}
Then $\det(1-(M_{a-1}\cdots M_1M_0)T^a)=\det(1-\vec{M}_{[a]}T)$.
\end{lemma}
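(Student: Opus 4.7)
The plan is to use the standard formula from Serre's theory of completely continuous operators,
\[
\det(1 - uT) = \exp\left(-\sum_{k\geq 1} \frac{\mathrm{Tr}(u^k)}{k}\, T^k\right),
\]
and show that the trace sequences on the two sides match up. So the real content is to compute $\mathrm{Tr}(\vec{M}_{[a]}^k)$ and compare it with $\mathrm{Tr}((M_{a-1}\cdots M_0)^\ell)$.

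First I would analyze the block structure of powers of $\vec{M}_{[a]}$. Viewing $\vec{M}_{[a]}$ as acting on $V^{\oplus a}$ (decomposed into $a$ blocks indexed by $\{1,\dots,a\}$), the definition of the matrix says it sends the $i$-th block into the $(i+1)$-th block via $M_{i-1}$ for $i<a$, and the $a$-th block into the first via $M_{a-1}$ (indices mod $a$). Hence $\vec{M}_{[a]}^k$ shifts the block index by $k \bmod a$. This immediately gives $\mathrm{Tr}(\vec{M}_{[a]}^k)=0$ whenever $a \nmid k$, since a nonzero trace requires diagonal blocks.

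Next, for $k = a\ell$, the power $\vec{M}_{[a]}^{a\ell}$ is block diagonal, and the $i$-th diagonal block equals $(M_{i-2}\cdots M_0 M_{a-1}\cdots M_{i-1})^\ell$, a cyclic rotation of $(M_{a-1}\cdots M_0)^\ell$. By the cyclic property of the trace all $a$ of these blocks have the same trace, namely $\mathrm{Tr}((M_{a-1}\cdots M_0)^\ell)$. Thus
\[
\mathrm{Tr}(\vec{M}_{[a]}^{a\ell}) = a\cdot\mathrm{Tr}((M_{a-1}\cdots M_0)^\ell).
\]

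Finally I would plug this into the exponential formula:
\[
\det(1-\vec{M}_{[a]}T) = \exp\left(-\sum_{\ell\geq 1} \frac{a\,\mathrm{Tr}((M_{a-1}\cdots M_0)^\ell)}{a\ell}\, T^{a\ell}\right) = \det(1-(M_{a-1}\cdots M_0)T^a),
\]
which is the desired identity. The main (and only) obstacle is justifying the cyclic-trace step and convergence in the completely continuous setting, but this is routine from Serre's framework once one confirms that the block matrix $\vec{M}_{[a]}$ is itself nuclear (which it is, since each $M_i$ is and the zero off-blocks preserve the decay condition on matrix entries). The combinatorial identification of the diagonal blocks of $\vec{M}_{[a]}^{a\ell}$ is elementary bookkeeping with the cyclic shift.
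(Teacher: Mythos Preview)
Your argument is correct. The paper itself gives no proof for this lemma beyond a citation to \cite[Section~5]{Li-Zhu:1}, so there is nothing substantive to compare against; your trace-exponential computation via Serre's formula $\det(1-uT)=\exp\bigl(-\sum_{k\ge 1}\mathrm{Tr}(u^k)T^k/k\bigr)$ together with the cyclic-shift block analysis of $\vec{M}_{[a]}^k$ is the standard route and is fully adequate here.
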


\begin{proof}
See \cite[Section 5]{Li-Zhu:1}.
\end{proof}

\begin{lemma}
\label{L:main} Let $\{M_t\}_{t=0,\ldots,a-1}$ be any nuclear matrices over $K
$. Let $ {\mathcal{A}} _t$ be the set of all $k\times k$ submatrices in $M_t$%
. Fix an integer $k\geq 1$ and let $c_k$ be the coefficient of $T^k$ in $%
\det(1-M_{a-1}M_{a-2}\cdots M_0 T)$. Then we have $ \mathrm{ord} _p c_k \geq
\sum_{t=0}^{a-1} \inf_{W_t\in  {\mathcal{A}} _t} \mathrm{ord} _p (\det W_t)$.
\end{lemma}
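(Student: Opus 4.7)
The plan is to expand the Fredholm determinant $\det(1 - NT)$, with $N := M_{a-1} M_{a-2} \cdots M_0$, as a series in the principal minors of $N$, and then to decompose each such minor via an iterated Cauchy--Binet identity into products of $k \times k$ minors of the individual factors $M_t$. First I would invoke the fact that a product of nuclear matrices is nuclear, and that for any nuclear matrix $N$ Serre's theory gives
$$\det(1 - NT) = \sum_{k \geq 0} (-1)^k \, s_k(N) \, T^k, \qquad s_k(N) = \sum_{|I| = k} \det(N_{I,I}),$$
where the sum runs over $k$-element index subsets $I$ and converges by nuclearity. Thus $c_k = (-1)^k s_k(N)$, and it suffices to estimate $\mathrm{ord}_p(s_k(N))$.

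Next I would apply Cauchy--Binet $a-1$ times, peeling one factor off of $N$ at each step, to obtain for every $k$-subset $I$
$$\det(N_{I,I}) = \sum_{I_1, \ldots, I_{a-1}} \prod_{t=0}^{a-1} \det\bigl((M_t)_{I_{t+1},\, I_t}\bigr),$$
with the convention $I_0 = I_a := I$ and each $I_j$ a $k$-element subset of the indexing set. Every factor $\det((M_t)_{I_{t+1}, I_t})$ is by definition the determinant of some $k \times k$ submatrix of $M_t$, hence an element of $\mathcal{A}_t$, and therefore has $p$-adic order at least $\inf_{W_t \in \mathcal{A}_t} \mathrm{ord}_p(\det W_t)$. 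Summing over $I$ and all the auxiliary $I_j$ and using the non-Archimedean inequality yields
$$\mathrm{ord}_p(c_k) = \mathrm{ord}_p(s_k(N)) \geq \sum_{t=0}^{a-1} \inf_{W_t \in \mathcal{A}_t} \mathrm{ord}_p(\det W_t),$$
which is the desired inequality.

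The main technical point will be justifying convergence of these multiply-indexed infinite sums in the nuclear (rather than finite-dimensional) setting. The condition $\lim_{i \to \infty} \inf_j \mathrm{ord}_p\, m_{ij}^{(t)} = +\infty$ ensures that for each bound $B$ only finitely many of the index choices $I, I_1, \ldots, I_{a-1}$ contribute terms of valuation below $B$, so the formal Cauchy--Binet expansion converges $p$-adically and the ultrametric lower bound passes to the limit. This is essentially the same convergence mechanism that makes the Fredholm expansion $\det(1-NT) = \sum (-1)^k s_k(N) T^k$ legitimate, so no new ideas beyond careful bookkeeping (and possibly approximation by finite-rank truncations of the $M_t$, then passing to the limit) should be needed.
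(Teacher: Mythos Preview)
Your proof is correct. The paper takes a different route: rather than applying Cauchy--Binet directly to the principal minors of the product $N = M_{a-1}\cdots M_0$, it invokes Lemma~\ref{L:transfer} to rewrite $\det(1-NT^a) = \det(1-\vec{M}_{[a]}T)$ for the block-cyclic matrix $\vec{M}_{[a]}$, and then expands the right-hand side as a sum over principal $ak\times ak$ minors of $\vec{M}_{[a]}$. The block structure forces any such minor with nonzero determinant to factor as $\pm\prod_{t} \det N_t$ with each $N_t$ a $k\times k$ submatrix of $M_t$, and the estimate follows. The two arguments are combinatorially equivalent --- your chain of index sets $I = I_0, I_1, \ldots, I_{a-1}, I_a = I$ is precisely the data of a principal $ak\times ak$ submatrix of $\vec{M}_{[a]}$ with all blocks square --- but your Cauchy--Binet phrasing is more self-contained for this lemma in isolation, while the paper's block-matrix formalism pays off later in the proof of Lemma~\ref{L:a-bounds}, where the principal-submatrix constraint (row indices of $N_t$ equal column indices of $N_{t+1}$) is exploited explicitly.
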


\begin{proof}
By Lemma \ref{L:transfer}, $c_k$ is the coefficient of $T^{ak}$ in the $T$%
-adic expansion of $\det(1-\vec{M}_{[a]}T)$, which is the infinite sum of $%
(-1)^{ak}\det N$ where $N$ runs over all principal $ak\times ak$ submatrices
in $\vec{M}_{[a]}$. Let $N$ be such a matrix, and let $N_t$ be the
intersection of $N$ and $M_t$ as submatrices of $\vec{M}_{[a]}$ for all $%
0\leq t\leq a-1$. It is easy to see that $\det N
=(-1)^{(ak-1)k}\prod_{t=0}^{a-1} \det N_t $ or $0$ depending on whether
every $N_t$ is a $k\times k$ submatrix of $M_t$ or not. So for $p$-adic
evaluation purpose, we may assume every $N_t$ is a $k\times k$ matrix. Think
of $N_t$ as a submatrix of $M_t$ from now on and $N_t\in  {\mathcal{A}} _t$.
Our assertion follows immediately.
\end{proof}

\section{$L$-functions of twisted exponential sums}

\label{S:3}

In this section we assume $s|(q-1)$. Let $k\geq 1$. Let $\chi_s$ be a
multiplicative character of order $s$ defined on ${\mathbb{F}}_{q^k}^\times$%
. We fix it as $\chi_s = \chi\circ N_{{\mathbb{F}}_{q^k}/{\mathbb{F}}%
_q}(\cdot)$ where $\chi$ is a multiplicative character of order $s$ on ${%
\mathbb{F}}_q^\times$.

Fix an integer $0\leq r \leq s-1$, let $\sigma_i$ be the cycle of $\sigma$
containing $r$, and $\lambda:=\lambda_i=\sum_{j\in\sigma_i}j/(s \ell_i)$.
For any Laurent polynomial $\overline{P}(x)$ in ${\mathbb{A}}_{d_1,d_2}({%
\mathbb{F}}_q)$, define the $L$-function

\begin{eqnarray}  \label{E:L-twist1}
L(\overline{P}(x)/{\mathbb{F}}_q,\chi_s^r;T) :=\exp(\sum_{k=1}^{\infty}S_k(%
\overline{P},\chi_s^r)\frac{T^k}{k}).
\end{eqnarray}
where $S_k(\overline{P},\chi_s^r)= \sum_{x\in {\mathbb{F}}%
_{q^k}^\times}\psi_{q^k}(\overline{P}(x))\chi_s^r(x)$.

{}From Weil's theorem, this $L$-function is a polynomial of degree $d_1+d_2$
and its reciprocal roots in ${\mathbb{C}}$ are algebraic integers with
Archimedean absolute value $q^{1/2}$ and $\ell$-adic absolute value $1$ for
any prime $\ell\neq p$. We shall study the $q$-adic absolute value of these
reciprocal roots. We denote by $ \mathrm{NP} _q(\overline{P},\chi_s^r;{%
\mathbb{F}}_q)$ the Newton polygon of $L(\overline{P}/{\mathbb{F}}%
_q,\chi_s^r;T)$ defined analogously as that for $ \mathrm{NP} _q(\overline{P}%
;{\mathbb{F}}_q)$.

\subsection{Twisted Hodge-Stickelberger polygons}

\label{S:3.1}

Denote by $ \mathrm{HS} ({\mathbb{A}}_{d_1,d_2},\nu,\chi_s^r)$ the \emph{%
twisted Hodge-Stickelberger polygon} of multiplicative character $\chi_s^r$
with slopes and lengths 
\begin{equation*}
\{(\frac{m+1-\lambda}{d_1},1)_{0\leq m\leq d_1-1};\quad (\frac{m+\lambda}{d_2%
},1)_{0\leq m\leq d_2-1}\} 
\end{equation*}
It is of total horizontal length $d_1+d_2$. This polygon can be found in the
literature, for example, see \cite[Theorem 3.20]{AS:1} and \cite[Corollary
3.18]{AS:2}. In the polynomial case,i.e., $d_2=0$, the twisted
Hodge-Stickelberger polygon consists of the first half of the above line
segments minus the segment $(0,1)$ and is of horizontal length $d_1-1$.

\begin{remark}
\label{R:4} The twisted Hodge-Stickelberger polygon $ \mathrm{HS} ({\mathbb{A%
}}_{d_1,d_2},\nu,\chi_s^r)$ we give above coincides with the Hodge polygon
defined in \cite[Corollary 3.18]{AS:2} in one-variable case. We shall verify
this explicitly below. Set $\d:=-(q-1)r/s$ in notation of \cite{AS:2}. Then $%
\d^{(v)} =-(q-1)\sigma^{v}(r)/s$, and for any $-d_2+1\leq j\leq d_1$, we
have 
\begin{equation*}
u_{\d^{(v)}}(j)=x^{\frac{\d^{(v)}}{q-1}+j},~\mbox{\rm and}~w(u_{\d^{(v)}%
}(j)) = \left\{ 
\begin{array}{ll}
\frac{\frac{\d^{(v)}}{q-1}+j}{d_1} =\frac{j}{d_1}-\frac{\sigma^{v}(r)}{sd_1}
& \mbox{if } j>0, \\ 
\frac{\frac{\d^{(v)}}{q-1}+j}{d_2} =-\frac{j}{d_2}+\frac{\sigma^{v}(r)}{sd_2}
& \mbox{if } j\leq 0.%
\end{array}
\right.
\end{equation*}
These are due to the fact that the weight of $x^r$ is $r/d_1$ when $r\geq 0$
and $-r/d_2$ when $r\leq 0$ in our case. The Hodge polygon slopes $b_j$
defined in \cite[above Theorem 3.17]{AS:2} can be expressed as 
\begin{equation*}
b_j= \left\{ 
\begin{array}{ll}
\frac{1}{a} \sum_{v=0}^{a-1}\left(\frac{j}{d_1}-\frac{\sigma^{v-a}(r)}{sd_1}%
\right) =\frac{j-\lambda}{d_1} & \mbox{if } j>0, \\ 
\frac{1}{a}\sum_{v=0}^{a-1}\left(\frac{-j}{d_2} +\frac{\sigma^{v-a}(r)}{sd_2}%
\right) =\frac{-j+\lambda}{d_2} & \mbox{if } j\leq 0%
\end{array}
\right. 
\end{equation*}
These yield exactly the slopes of our twisted Hodge-Stickelberger polygon $ 
\mathrm{HS} ({\mathbb{A}}_{d_1,d_2},\nu,\chi_s^r)$ defined above.
\end{remark}

We use $\boxplus$ to denote the concatenation of line segments which are
given via pairs of slopes and horizontal length so that the slopes are in
non-decreasing order. Now we have the splitting of the Hodge-Stickelberger
polygon into twisted Hodge-Stickelberger polygons below in the lemma.

\begin{lemma}
\label{L:HS-split} (i) We have 
\begin{equation*}
\mathrm{HS} ({\mathbb{A}}_{d_1,d_2},\nu,s) = \boxplus_{i}\ell_i\;  \mathrm{HS%
} ({\mathbb{A}}_{d_1,d_2},\nu,\chi_s^{r_i}) 
\end{equation*}
where the box-sum ranges in the distinct cycles $\sigma_i$ of $\sigma$, and
for each $i$ $r_i$ is a representative in $\sigma_i$.

(ii) If $\nu=1$ then $ \mathrm{HS} ({\mathbb{A}}_{d_1,d_2},1,s) =  \mathrm{HP%
} ({\mathbb{A}}_{sd_1,sd_2})$.
\end{lemma}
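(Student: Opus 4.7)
The plan is to verify both statements by direct enumeration of slope-length multisets, since both polygons on each side of the asserted equalities are determined entirely by lists of slopes (with multiplicities), and the concatenation $\boxplus$ simply re-orders a multiset of slopes in non-decreasing order.

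For part (i), I would observe that the defining list in (\ref{E:HS}) for $\mathrm{HS}(\mathbb{A}_{d_1,d_2},\nu,s)$ is itself already indexed by the cycles $\sigma_i$ of $\sigma$. Fixing a cycle $\sigma_i$ and selecting any representative $r_i\in\sigma_i$, the invariant $\lambda_i$ depends only on $\sigma_i$ and equals the $\lambda$ associated to $r_i$ in the definition of the twisted polygon in Section \ref{S:3.1}. Thus the slope/length pairs of $\mathrm{HS}(\mathbb{A}_{d_1,d_2},\nu,\chi_s^{r_i})$ are $(\frac{m+1-\lambda_i}{d_1},1)$ for $0\le m\le d_1-1$ together with $(\frac{m+\lambda_i}{d_2},1)$ for $0\le m\le d_2-1$; multiplying all lengths by $\ell_i$ gives exactly the $i$-th batch appearing in (\ref{E:HS}). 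Concatenating over all cycles $\sigma_i$ recovers the full multiset of slopes defining $\mathrm{HS}(\mathbb{A}_{d_1,d_2},\nu,s)$.

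For part (ii), assume $\nu=1$, so $\sigma$ is the identity permutation on $\{0,\ldots,s-1\}$. Then $u=s$ and every cycle is a $1$-cycle, so $\ell_i=1$, and labeling so that $\sigma_r=\{r\}$ we obtain $\lambda_r=r/s$ for $r=0,1,\ldots,s-1$. Substituting into (\ref{E:HS}), the slopes coming from the $d_1$-block become $\frac{m+1-r/s}{d_1}=\frac{s(m+1)-r}{sd_1}$ as $(m,r)$ ranges over $\{0,\ldots,d_1-1\}\times\{0,\ldots,s-1\}$, and the key observation is that the integer $s(m+1)-r$ bijects this index set with $\{1,2,\ldots,sd_1\}$; similarly the $d_2$-block yields slopes $\frac{sm+r}{sd_2}$ with $sm+r$ ranging over $\{0,1,\ldots,sd_2-1\}$. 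The resulting multiset is $\{\frac{j}{sd_1}:1\le j\le sd_1\}\cup\{\frac{j}{sd_2}:0\le j\le sd_2-1\}$, where the slope $0$ appears once (from $j=0$ in the $d_2$-block) and the slope $1$ appears once (from $j=sd_1$ in the $d_1$-block). Comparing with the explicit description of $\mathrm{HP}(\mathbb{A}_{sd_1,sd_2})$ recalled in Remark \ref{R:1}, one sees the two multisets coincide; since $\boxplus$ enforces non-decreasing ordering, the two polygons are equal.

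Neither part should present a genuine obstacle; the only subtlety is bookkeeping the bijections $(m,r)\mapsto s(m+1)-r$ and $(m,r)\mapsto sm+r$ in part (ii), which is where I would be most careful to keep the endpoints ($j=0$ and $j=sd_1$) correctly accounted for.
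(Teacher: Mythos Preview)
Your proposal is correct and follows essentially the same approach as the paper's own proof: part (i) is immediate from the definition in (\ref{E:HS}), and part (ii) is the explicit calculation that when $\nu=1$ every cycle is a $1$-cycle with $\lambda_r=r/s$. You have simply made explicit the bijections $(m,r)\mapsto s(m+1)-r$ and $(m,r)\mapsto sm+r$ that the paper leaves as ``explicit and elementary calculation.''
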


\begin{proof}
The first statement is clear by the definition of $ \mathrm{HS} ({\mathbb{A}}%
_{d_1,d_2},\nu,s)$ in (\ref{E:HS}). For the second statement, one only needs
to recognize that for $p\equiv 1\bmod s$ we have $\ell_i=\ell_i^{\prime}=1$
for every $i$ and $\lambda_i=r/s$ for every $0\leq r \leq s-1$ in $\sigma_i$%
. The rest is explicit and elementary calculation.
\end{proof}

\subsection{Trace formula for twisted exponential sums}

\label{S:3.2}

Let ${\mathbb{Q}}_q$ denote the unique unramified extension of ${\mathbb{Q}}%
_p$ of degree $a$ and ${\mathbb{Z}}_q$ its ring of integers. Let $\Omega_1:={%
\mathbb{Q}}_p(\zeta_p)$ and let $\Omega_a$ the unique unramified extension
of $\Omega_1$ of degree $a$ in ${\mathbb{C}}_p$. Recall that $\gamma\in
\Omega_1$ such that ${\mathbb{Z}}_p[\gamma]={\mathbb{Z}}_p[\zeta_p]$. Fix
roots $\gamma^{1/d_1}$ and $\gamma^{1/d_2}$ in ${\mathbb{C}}_p$, we denote
by $\Omega_1^{\prime}=\Omega_1(\gamma^{1/d_1},\gamma^{1/d_2})$ and $%
\Omega_a^{\prime}=\Omega_1^{\prime}\Omega_a$. Below we denote by $K$ ($%
K^{\prime}$ respectively) a complete non-Archimedean field containing $%
\Omega_a$ ($\Omega_a^{\prime}$ respectively).

By taking Teichm\"uller lifts of coefficients of $\overline{P}\in{\mathbb{F}}%
_q[x,x^{-1}]$, we get $\hat{P}(x)=\sum_{i=-d_2}^{d_1} \hat{a_i} x^i\in{%
\mathbb{Z}}_q[x,x^{-1}]$. Note that $\hat{a_i}^q=\hat{a_i}$ and $\hat{a_i}%
\equiv \overline{a}_i\bmod  {\mathcal{P}} $ where $ {\mathcal{P}} $ is the
prime ideal in $\Omega_a$ lying over $p$. For any $0<\rho<1$ in $|K|_p$ let $
{\mathcal{H}} _\rho(K)$ be the ring of rigid analytic functions over $K$ on
the annulus with $\rho\leq |x|_p\leq 1/\rho$. It is a $p$-adic Banach space
with the natural $p$-adic supremum norm.

Let the operator $U_q$ on $ {\mathcal{H}} _\rho$ be defined by $(U_q\xi)(X):=%
\frac{1}{q} \sum_{Z^q=X}\xi(Z)$ for any $\xi\in {\mathcal{H}} _\rho$. If $%
\xi(X)=\sum_{i=-\infty}^{\infty}c_iX^i$ then $U_q(\xi)=\sum_{i=-\infty}^{%
\infty}c_{iq}X^i$. Let $\tau$ be a lifting of the Frobenius of $\overline{{%
\mathbb{F}}}_p$ to $K$ such that $\tau(\gamma)=\gamma$. Define three
elements in $ {\mathcal{H}} _\rho(K)$ below 
\begin{eqnarray}
F(X)&=&\prod_{i=-d_2}^{d_1}E(\gamma\hat{a_i} X^i), \\
F_{[a]}(X)&=&\prod_{t=0}^{a-1}F^{\tau^t}(X^{p^t}), \\
H(X)&=& X^{\frac{(q-1)r}{s}}F_{[a]}(X).
\end{eqnarray}
These above are all power series in ${\mathbb{Z}}_p[\gamma][\vec{\hat{a_i}}%
][[X]]$ and hence in ${\mathbb{Z}}_q[\zeta_p][[X]]$. Let $\alpha:=U_q\circ
H(X)$ by which we mean the composition map of $U_q$ with the multiplication
map by $H(X)$. Then $\alpha$ is a completely continuous $K$-linear
endomorphism of $ {\mathcal{H}} _\rho(K)$ for some suitable $0<\rho<1$.

\begin{lemma}
We have 
\begin{eqnarray}  \label{E:trace-formula}
L(\overline{P}/{\mathbb{F}}_q,\chi_s^r;T) &=& \frac{\det(1-T\alpha)}{%
\det(1-Tq\alpha)}
\end{eqnarray}
and it is a polynomial in ${\mathbb{Z}}[\zeta_p,\zeta_s][T]$ of degree $%
d_1+d_2$.
\end{lemma}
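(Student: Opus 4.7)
The strategy is the Dwork $p$-adic trace formula, carried out in three steps.

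\emph{Step 1: Rewriting the integrand.} For $x\in\F_{q^k}^\times$ with Teichm\"uller lift $\hat x$, the defining property of the splitting function $E(\gamma t)$ gives
\[
\psi_{q^k}(\bar P(x))\;=\;\prod_{t=0}^{ak-1} F^{\tau^t}(\hat x^{p^t}),
\]
and regrouping the Frobenius orbits of length $a$ (using $\tau^a=\mathrm{id}$ on the coefficients of $F$, which lie in $\Z_q[\gamma]$) rewrites this as $\prod_{j=0}^{k-1} F_{[a]}(\hat x^{q^j})$. Because $s\mid (q-1)$, the Teichm\"uller description of tame characters gives $\chi_s^r(x)=\hat x^{(q^k-1)r/s}=\prod_{j=0}^{k-1}(\hat x^{q^j})^{(q-1)r/s}$. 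Multiplying the two identifies the summand of $S_k(\bar P,\chi_s^r)$ as $\prod_{j=0}^{k-1} H(\hat x^{q^j})$.

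\emph{Step 2: Trace formula.} I would then invoke the Dwork--Monsky--Reich trace formula for $\alpha=U_q\circ H$ acting on $\cH_\rho(K')$. A direct unfolding shows that $\alpha^k=U_{q^k}\circ G_k$ for an explicit product $G_k$ assembled from Frobenius pullbacks of $H$ whose restriction to a Teichm\"uller point $\hat x$ with $\hat x^{q^k-1}=1$ equals $\prod_{j=0}^{k-1} H(\hat x^{q^j})$. Computing the trace of $U_{q^k}\circ G_k$ in the monomial basis of $\cH_\rho(K')$ then produces the key identity
\[
(q^k-1)\,\Tr(\alpha^k)\;=\;\sum_{\hat x^{q^k-1}=1}\;\prod_{j=0}^{k-1}H(\hat x^{q^j}).
\]
Combined with Step~1 this gives $S_k(\bar P,\chi_s^r)=(q^k-1)\,\Tr(\alpha^k)$.

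\emph{Step 3: Exponentiation and integrality.} Using the Fredholm identity $\det(1-T\beta)=\exp\!\bigl(-\sum_k \Tr(\beta^k)T^k/k\bigr)$ applied to both $\beta=\alpha$ and $\beta=q\alpha$, one obtains
\[
L(\bar P/\F_q,\chi_s^r;T)=\exp\!\Bigl(\sum_{k\geq 1}(q^k-1)\Tr(\alpha^k)\tfrac{T^k}{k}\Bigr)=\frac{\det(1-T\alpha)}{\det(1-Tq\alpha)}.
\]
That this rational function is in fact a polynomial of degree $d_1+d_2$ follows from Weil's theorem for twisted exponential sums on $\mathbb{G}_m$ with tame character (equivalently from the Dwork--Grothendieck cohomological interpretation of the ratio as the characteristic polynomial on a single nonvanishing cohomology group). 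Since each $S_k\in\Z[\zeta_p,\zeta_s]$ by its very definition and the reciprocal roots are algebraic integers (Weil), the polynomial lies in $\Z[\zeta_p,\zeta_s][T]$.

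The main obstacle is the $p$-adic analytic bookkeeping that precedes Steps~2--3: one must choose $\rho$ so that $H(X)\in\cH_\rho(K')$ (the fractional factor $X^{(q-1)r/s}$ is exactly what forces the ground field extension to $\Omega_a'$ with $\gamma^{1/d_i}$ adjoined, and the growth of $F_{[a]}$ at both ends of the annulus must dominate), check that $U_q\circ H$ is completely continuous with a computable trace, and verify the compositional identity $\alpha^k=U_{q^k}\circ G_k$ with the claimed evaluation on Teichm\"uller points. Once this framework is set up, the trace computation and the final exponentiation are routine.
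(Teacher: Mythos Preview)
Your approach is essentially the same as the paper's: the paper simply says the trace formula identity is ``a routine consequence of the Dwork--Monsky--Reich trace formula'' and cites \cite{Denef-Loeser} (or \cite{AS:2}) for the degree, without spelling out any of the steps you outline.

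One small correction to your discussion of obstacles: the exponent $(q-1)r/s$ is an \emph{integer}, since Section~\ref{S:3} works under the standing hypothesis $s\mid(q-1)$; so $H(X)=X^{(q-1)r/s}F_{[a]}(X)$ involves no fractional power of $X$, and $\alpha$ is already completely continuous on $\cH_\rho(K)$ with $K\supset\Omega_a$. The passage to $K'=K(\gamma^{1/d_1},\gamma^{1/d_2})$ occurs only later (Section~\ref{S:3.3}), and for a different reason---to introduce the weighted basis $\vec b$ that makes the row-by-row valuation estimates of Lemma~\ref{L:bounds} transparent---not because $H$ itself fails to live in $\cH_\rho(K)$.
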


\begin{proof}
The rationality is a routine consequence of the Dwork-Monsky-Reich trace
formula so we omit its proof here. The assertion of its degree follows from 
\cite{Denef-Loeser} (or \cite{AS:2}).
\end{proof}

\subsection{$p$-adic estimate of twisted exponential sums}

\label{S:3.3}

Let $0\leq r\leq s-1$. Write the $p$-adic expansion 
\begin{eqnarray}  \label{E:Kt-2}
(q-1)r/s &=& \sum_{t=0}^{a-1}K_t p^t
\end{eqnarray}
for $0\leq K_t\leq p-1$. Then we have 
\begin{eqnarray}  \label{E:Kt}
\lambda &= \frac{\sum_{t=0}^{a-1}K_t}{a(p-1)}& = \frac{s_p((q-1)r/s)}{a(p-1)}
\end{eqnarray}
where $s_p(\cdot)$ denotes the sum of $p$-adic expansions. 

Let $F_t(X)=X^{K_t}F^{\tau^t}(X)$ and 
\begin{eqnarray*}
\alpha_t &:=& U_p \circ F_t(X).
\end{eqnarray*}

\begin{lemma}
\label{L:factorization} The maps $\alpha_t$ are completely continuous $K$%
-linear endomorphisms of $ {\mathcal{H}} _\rho(K)$ for some suitable $%
0<\rho<1$. We have 
\begin{eqnarray}  \label{E:1}
\alpha = \alpha_{a-1}\circ \cdots \circ\alpha_1\circ\alpha_0.
\end{eqnarray}
\end{lemma}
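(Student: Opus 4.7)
The plan has two parts: establish complete continuity of each $\alpha_t$, then derive the factorization~(\ref{E:1}) by iterated application of a single sliding identity.

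For complete continuity, I would invoke standard Dwork-style arguments. Since $\mathrm{ord}_p\gamma=1/(p-1)$ and $E(X)\in 1+X\Z_p[[X]]$, each factor $E(\gamma\hat{a_i}X^i)$ is analytic on an annulus $\rho\leq |X|_p\leq 1/\rho$ for a common $0<\rho<1$ sufficiently close to $1$; the negative-exponent factors $(i<0)$ force $\rho>0$ strictly, the positive-exponent ones force $\rho<1$ strictly, and both constraints can be met simultaneously. Hence $F^{\tau^t}$ and $F_t(X)=X^{K_t}F^{\tau^t}(X)$ both lie in $\cH_\rho(K)$. Multiplication by $F_t$ is then a bounded $K$-linear operator on $\cH_\rho(K)$, while $U_p$ is completely continuous on $\cH_\rho(K)$ for this choice of $\rho$ (as in Serre's theory), so the composition $\alpha_t=U_p\circ F_t$ is completely continuous.

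For the factorization, the key ingredient is the commutation
\[
M_g\circ U_p \;=\; U_p\circ M_{g(X^p)},
\]
where $M_g$ denotes multiplication by $g$. This follows immediately from $(U_p\xi)(X)=p^{-1}\sum_{Z^p=X}\xi(Z)$ by pulling $g(Z^p)=g(X)$ outside the sum. Writing
\[
\alpha_{a-1}\circ\cdots\circ\alpha_0 \;=\; (U_p\circ M_{F_{a-1}})\circ(U_p\circ M_{F_{a-2}})\circ\cdots\circ(U_p\circ M_{F_0}),
\]
I would iteratively slide each $M_{F_t}$ rightward past the $U_p$'s following it. After passing $t$ such copies of $U_p$, the operator $M_{F_t}$ has turned into $M_{F_t(X^{p^t})}$, and all $a$ copies of $U_p$ accumulate on the far left, giving
\[
U_p^a\circ M_{\prod_{t=0}^{a-1} F_t(X^{p^t})} \;=\; U_p^a\circ M_{X^{\sum_{t=0}^{a-1}p^tK_t}\,\prod_{t=0}^{a-1} F^{\tau^t}(X^{p^t})}.
\]
Now $U_p^a=U_q$ is immediate from the definition, the $p$-adic expansion~(\ref{E:Kt-2}) gives $\sum_{t=0}^{a-1}p^tK_t=(q-1)r/s$, and $\prod_{t=0}^{a-1} F^{\tau^t}(X^{p^t})=F_{[a]}(X)$ by definition, so the right-hand side collapses to $U_q\circ M_{H(X)}=\alpha$, as required.

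I do not expect a serious obstacle. The only technicalities are the bookkeeping of exponents under sliding, and the uniform convergence check so that every $F^{\tau^t}$ lies in a single $\cH_\rho(K)$; both are routine once organized.
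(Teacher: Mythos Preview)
Your proposal is correct and follows essentially the same approach as the paper: the paper dismisses complete continuity as ``Dwork theory'' and then uses the identical commutation relation $f(x)\circ U_p = U_p\circ f(x^p)$ to slide all multiplication operators to the right, obtaining $U_p^a\circ\bigl(X^{\sum_t K_t p^t}F_{[a]}(X)\bigr)=U_q\circ H(X)=\alpha$ via~(\ref{E:Kt-2}). Your write-up simply unpacks these steps in more detail.
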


\begin{proof}
The first statement is Dwork theory. Using $f(x)\circ U_p = U_p\circ f(x^p)$%
, we have by (\ref{E:Kt-2}) 
\begin{eqnarray*}
\alpha_{a-1}\circ\cdots\circ \alpha_0&=& (U_p\circ \cdots \circ U_p)\circ
(X^{\sum_{t=0}^{a-1}K_tp^t} F_{[a]}(X)) \\
&=& U_q\circ H(X) = \alpha.
\end{eqnarray*}
This finishes the proof.
\end{proof}

For any $i\in {\mathbb{Z}}$, consider the $p$-adic Mittag-Leffler
decomposition $F(X)X^i= \sum_{m=-\infty}^{\infty}H^{m,i}X^m$. Write $%
\alpha_t(X^i) = \sum_{m=-\infty}^{\infty}B^{m,i}_tX^m,$ we have $%
B^{m,i}_t=\tau^t H^{mp-K_t,i}$. We know $H^{m,i}, B^{m,i}_t$ lie in ${%
\mathbb{Z}}_p[\gamma][\vec{\hat{a_i}}]$. Then from the $p$-adic valuation of
the coefficients of $E$ (see \cite{Dwork1}) we have 
\begin{equation*}
\mathrm{ord} _p B^{m,i}_t \geq \frac{1}{p-1}\max\left(\frac{pm-K_t-i}{d_1},-%
\frac{pm-K_t-i}{d_2}\right)
\end{equation*}

By $p$-adic Mittag-Leffler decomposition, every element in the $K$-linear
space $ {\mathcal{H}} _\rho(K)$ can be uniquely represented as $%
\sum_{i=-\infty}^{\infty} c_iX^i$ for $c_i\in K$, and so $ {\mathcal{H}}
_\rho(K)$ has a natural monomial basis $\vec{b}_{\mathrm{unw}%
}=\{1,X,X^2,\ldots;X^{-1},X^{-2},\ldots\}$. Let $Z_1=\gamma^{1/d_1}X$ and $%
Z_2=\gamma^{1/d_2}X^{-1}$, then $\vec{b}= \{Z_1,Z_1^2,\ldots;
1,Z_2,Z_2^2,\ldots \} $ forms a basis for $ {\mathcal{H}} _\rho(K^{\prime})$%
. 
Let $M_t$ be the matrix of $\alpha_t$ with respect to the basis $\vec{b}$.
Its entries lie in ${\mathbb{Z}}_q[\gamma^{1/d_1},\gamma^{1/d_2}]$. From now
on, we shall consider the coefficients liftings $\hat{a_i}$ of $P(x)$ as
variables throughout this section, and set $\vec{\hat{a}}=(\hat{a}_i)$, then
the entries of $M_t$ lie in ${\mathbb{Z}}_p[\gamma^{1/d_1},\gamma^{1/d_2}][%
\vec{\hat{a}}]$. Note that $ \mathrm{ord} _p(\cdot)$ and $ \mathrm{ord}
_q(\cdot)$ also denote the natural $p$-adic valuations on the multi-variable
polynomial ring ${\mathbb{Z}}_p[\gamma^{1/d_1},\gamma^{1/d_2}][\vec{\hat{a}}]
$ induced from that on ${\mathbb{Z}}_p$.

We are ready to give estimates for the $p$-adic valuations of the
coefficients of $M_t$. Note that we omit the subscript $t$ in the
coefficients since no confusion can occur.

\begin{lemma}
\label{L:bounds} For all $i\geq 0$ we have $\alpha_t Z_J^i =
\sum_{m=1}^{\infty}C_{1,J}^{m,i}Z_1^m+\sum_{m=0}^{\infty}C_{2,J}^{m,i}Z_2^m$
where $C_{\bigstar}^{m,i}$ are the entries of $M_t$. The lower bounds of $ 
\mathrm{ord} _pC_{\bigstar}^{m,i}$ are

\begin{tabular}{|c||ll|}
\hline
$\mathrm{ord} _p(\cdot)\geq$ & $Z_1^i~(i>0)$ & $Z_2^i~(i\geq 0)$ \\ 
\hline\hline
$Z_1^m~(m>0)$ & \fbox{$\frac{m}{d_1}-\frac{K_t}{d_1(p-1)}$} & $\frac{m}{d_1}-%
\frac{K_t}{d_1(p-1)}+\frac{i}{p-1}(\frac{1}{d_1} +\frac{1}{d_2})$ \\ 
$Z_2^m~(m\geq 0)$ & $\frac{m}{d_2}+\frac{K_t}{d_2(p-1)}+\frac{i}{p-1}(\frac{1%
}{d_1} +\frac{1}{d_2})$ & \fbox{$\frac{m}{d_2}+\frac{K_t}{d_2(p-1)}$} \\ 
\hline
\end{tabular}
\end{lemma}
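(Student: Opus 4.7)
The plan is to read the entries $C_{\bigstar}^{m,i}$ of $M_t$ directly from the action of $\alpha_t$ in the basis $\vec{b}$, and to combine the Dwork-type bound on the coefficients $B^{m,i}_t$ of $\alpha_t(X^i)=\sum_m B^{m,i}_t X^m$ (recorded just above the lemma) with the weights of $\gamma$ absorbed by the change of basis $Z_1=\gamma^{1/d_1}X$, $Z_2=\gamma^{1/d_2}X^{-1}$. The structural facts used are $\mathrm{ord}_p\gamma=1/(p-1)$, and that the bound on $B^{m,i}_t$ is a maximum of two quantities, one coming from the pole of $F(X)$ at infinity (weight $1/d_1$) and one from the pole at zero (weight $1/d_2$).

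First I would substitute $Z_1^i=\gamma^{i/d_1}X^i$ (for $i>0$) and $Z_2^i=\gamma^{i/d_2}X^{-i}$ (for $i\geq 0$) into $\alpha_t$, and then re-expand each monomial $X^m$ in the output using $X^m=\gamma^{-m/d_1}Z_1^m$ when $m>0$ and $X^{-m}=\gamma^{-m/d_2}Z_2^m$ when $m\geq 0$. This produces the four closed-form expressions
\begin{align*}
C^{m,i}_{1,1} &= \gamma^{(i-m)/d_1}B^{m,i}_t, & C^{m,i}_{2,1} &= \gamma^{i/d_1-m/d_2}B^{-m,i}_t, \\
C^{m,i}_{1,2} &= \gamma^{i/d_2-m/d_1}B^{m,-i}_t, & C^{m,i}_{2,2} &= \gamma^{(i-m)/d_2}B^{-m,-i}_t.
\end{align*}

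Next I would take $\mathrm{ord}_p$ of each expression. For each case I choose the branch of the max in the $B$-bound that is adapted to the target: the $d_1$-branch for images into the $Z_1$-block, the $d_2$-branch for images into the $Z_2$-block. In the two diagonal cases $(J,J)$ the $i$-contribution coming from the front factor $\gamma^{i/d_J}$ cancels exactly against the $i$-contribution of the chosen branch, leaving the sharp boxed bounds $m/d_1-K_t/(d_1(p-1))$ and $m/d_2+K_t/(d_2(p-1))$. In the two off-diagonal cases the cancellation is incomplete and leaves the extra term $\frac{i}{p-1}\bigl(\frac{1}{d_1}+\frac{1}{d_2}\bigr)$ indicated in the table. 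All remaining manipulations reduce to the identity $(p-1)m=pm-m$ and routine rearrangement.

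The main obstacle is purely bookkeeping: tracking signs, directions of $X$ versus $X^{-1}$, and the index shifts $B^{\pm m,\pm i}_t$ through the four $(J',J)$ cases while selecting the correct branch of the max each time. No deeper idea is needed once one recognises that the factors $\gamma^{\pm 1/d_J}$ inserted in the basis vectors were designed precisely to absorb the pole weights of $F_t(X)$; it is this absorption which makes the two diagonal bounds independent of $i$, and this sharp $i$-independence is what will drive the subsequent Hodge--Stickelberger Newton polygon estimates via Lemma~\ref{L:main}.
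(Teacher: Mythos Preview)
Your argument is correct and is exactly the standard Dwork-theoretic computation that the paper has in mind; the paper itself does not write out a proof but simply refers to \cite{Zhu:3}, pages 1542--1543, for the details. What you have done is reconstruct those details: express $C_{J',J}^{m,i}$ as $\gamma^{\bullet}B_t^{\pm m,\pm i}$ via the change of basis, then apply the appropriate branch of the $\max$ bound on $B_t^{m,i}$ recorded just above the lemma, so your approach and the paper's (referenced) approach coincide.
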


\begin{proof}
See \cite{Zhu:3} page 1542--1543 for details.
\end{proof}

For any $0\leq t\leq a-1$, let $ {\mathcal{L}} _t$ be the set of rational
numbers $ {\mathcal{L}} _t:= \{ \frac{m}{d_1}- \frac{K_t}{d_1(p-1)} | m\geq
1\}\cup\{ \frac{m}{d_2}+ \frac{K_t}{d_2(p-1)} | m\geq 0\}. $ For every $%
k\geq 1$ let $\delta_t^{(k)}$ the sum of $k$ least numbers in $ {\mathcal{L}}
_t$. Split these $k$ numbers in terms of $j=1$ or $2$ we have $k_1+k_2=k$
such that 
\begin{eqnarray*}
\delta_t^{(k)} &=&\sum_{m=1}^{k_1}(\frac{m}{d_1}-\frac{K_t}{d_1(p-1)})
+\sum_{m=0}^{k_2-1}(\frac{m}{d_2}+\frac{K_t}{d_2(p-1)}).
\end{eqnarray*}
By (\ref{E:Kt}) we have (note that $k_1$ and $k_2$ do not depend on $t$) 
\begin{eqnarray}
\frac{1}{a}\sum_{t=0}^{a-1}\delta_t^{(k)} &=& \frac{k_1(k_1+1)}{2d_1} - 
\frac{k_1\lambda}{d_1} +\frac{k_2(k_2-1)}{2d_2} + \frac{k_2\lambda}{d_2} 
\notag \\
&=& \frac{k_1(k_1-1)}{2d_1} + \frac{k_1(1-\lambda)}{d_1} +\frac{k_2(k_2-1)}{%
2d_2} + \frac{k_2\lambda}{d_2}.  \label{E:delta}
\end{eqnarray}

\begin{lemma}
\label{L:bounds2} For any $k\times k$ submatrix of $M_t$, $W_t$, we have 
\begin{equation*}
\mathrm{ord} _p(\det W_t) \geq \delta_t^{(k)}.
\end{equation*}
\end{lemma}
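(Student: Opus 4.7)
The plan is to expand the determinant along the standard permutation formula and reduce the estimate to a purely combinatorial minimization over choices of $k$ rows. Writing $\det W_t = \sum_{\pi} \mathrm{sgn}(\pi)\prod_{j} w_{j,\pi(j)}$, the ultrametric triangle inequality gives
\begin{equation*}
\mathrm{ord}_p(\det W_t) \geq \min_{\pi} \sum_{j=1}^{k} \mathrm{ord}_p (w_{j,\pi(j)}),
\end{equation*}
so it suffices to produce, for each row of $W_t$, a lower bound on $\mathrm{ord}_p$ that depends only on which basis element indexes that row.

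The key observation is that the table in Lemma \ref{L:bounds} provides exactly such row-only bounds. Inspecting both columns, every entry sitting in the row indexed by $Z_1^m$ has $p$-adic valuation at least $\frac{m}{d_1} - \frac{K_t}{d_1(p-1)}$, because the only column-dependent correction appearing in the off-diagonal block is the nonnegative term $\frac{i}{p-1}\bigl(\frac{1}{d_1}+\frac{1}{d_2}\bigr)$; symmetrically, every entry in the row indexed by $Z_2^m$ has valuation at least $\frac{m}{d_2}+\frac{K_t}{d_2(p-1)}$. These are precisely the elements of the set $\mathcal{L}_t$. Consequently, if the rows selected for $W_t$ correspond to indices $m_1,\dots,m_{k_1}\geq 1$ from the $Z_1$-block and $m_1',\dots,m_{k_2}'\geq 0$ from the $Z_2$-block with $k_1+k_2=k$, then every permutation summand in $\det W_t$ has $p$-adic valuation at least
\begin{equation*}
\sum_{j=1}^{k_1}\left(\frac{m_j}{d_1}-\frac{K_t}{d_1(p-1)}\right) + \sum_{j=1}^{k_2}\left(\frac{m_j'}{d_2}+\frac{K_t}{d_2(p-1)}\right).
\end{equation*}

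The final step is a minimization: among all admissible row selections, this lower bound is minimized by taking the smallest available row indices in each block, namely $m_j=j$ for $1\leq j\leq k_1$ and $m_j'=j-1$ for $1\leq j\leq k_2$, and then by optimizing over the split $k_1+k_2=k$. By definition of $\delta_t^{(k)}$ as the sum of the $k$ least elements of $\mathcal{L}_t$, the infimum over all row choices and all such splittings is exactly $\delta_t^{(k)}$, proving the desired inequality. The only delicate point, and what I expect to be the main place to be careful, is the verification that the ``row-only'' bound really holds uniformly across both blocks of columns; this is handled by noting that the column-dependent correction in the off-diagonal block of Lemma \ref{L:bounds} is nonnegative, so discarding it only weakens the bound.
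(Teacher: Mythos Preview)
Your argument is correct and is precisely the fleshed-out version of what the paper leaves implicit in its one-line proof ``Follows from Lemma~\ref{L:bounds}'': the row-only lower bounds from the table, combined with the nonnegativity of the off-diagonal correction $\frac{i}{p-1}\bigl(\frac{1}{d_1}+\frac{1}{d_2}\bigr)$, reduce the estimate to summing the $k$ smallest elements of $\mathcal{L}_t$. There is no genuine difference in approach.
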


\begin{proof}
Follows from Lemma \ref{L:bounds}.
\end{proof}

\begin{proposition}
\label{P:twisted} Write $\det(1-\alpha T) = 1+\sum_{k=1}^{\infty} C_kT^k$,
then

\begin{enumerate}
\item[(i)] $ \mathrm{ord} _qC_k \geq \frac{1}{a}\sum_{t=0}^{a-1}%
\delta_t^{(k)}$;

\item[(ii)] $ \mathrm{NP} _q(\overline{P}, \chi_s^r;{\mathbb{F}}_q) \succ 
\mathrm{HS} ({\mathbb{A}}_{d_1,d_2},\nu,\chi_s^r)$ for all $\overline{P}\in{%
\mathbb{A}}_{d_1,d_2}({\mathbb{F}}_q)$;

\item[(iii)] These two polygons coincide if and only if $p\equiv 1\bmod  
\mathrm{lcm } (sd_1,sd_2)$ or $(d_1,d_2)=(1,0)$.
\end{enumerate}
\end{proposition}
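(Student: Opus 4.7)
\textbf{Proof plan for Proposition \ref{P:twisted}.}

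For part (i), the plan is to combine the factorization of $\alpha$ with the two abstract lemmas of Section \ref{S:2}. Lemma \ref{L:factorization} gives $\alpha = \alpha_{a-1}\circ\cdots\circ\alpha_0$, and with respect to the basis $\vec b$ this becomes a matrix product $M_{a-1}\cdots M_0$. Applying Lemma \ref{L:main}, the coefficient $C_k$ of $T^k$ in $\det(1-T\alpha)$ satisfies
\begin{equation*}
\mathrm{ord}_p C_k \;\geq\; \sum_{t=0}^{a-1}\inf_{W_t}\mathrm{ord}_p(\det W_t),
\end{equation*}
where $W_t$ runs through $k\times k$ submatrices of $M_t$. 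By Lemma \ref{L:bounds2} each infimum is $\geq \delta_t^{(k)}$; dividing by $a$ converts $p$-adic to $q$-adic valuation. This yields (i).

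For part (ii), I would first remark that by the trace formula (\ref{E:trace-formula}) the $L$-function is $\det(1-T\alpha)/\det(1-Tq\alpha)$, so its low-slope part is controlled by $\det(1-T\alpha)$ (the denominator contributes only slopes shifted up by $1$), and the symmetry via the functional equation handles the high-slope part. Then I would show that the $k$-th vertex height of $\mathrm{HS}(\mathbb{A}_{d_1,d_2},\nu,\chi_s^r)$ equals
\begin{equation*}
\min_{k_1+k_2=k}\left[\frac{k_1(k_1-1)}{2d_1}+\frac{k_1(1-\lambda)}{d_1}+\frac{k_2(k_2-1)}{2d_2}+\frac{k_2\lambda}{d_2}\right],
\end{equation*}
since that is exactly the sum of the $k$ smallest slopes from the combined family defining $\mathrm{HS}$. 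Formula (\ref{E:delta}) identifies this minimum with $\tfrac{1}{a}\sum_t\delta_t^{(k)}$ for the split $(k_1,k_2)$ that picks the $k$ smallest numbers in $\bigcup_t \mathcal{L}_t$; combining with (i) gives $\mathrm{ord}_q C_k \geq $ the height of $\mathrm{HS}$ at $k$, and taking the lower convex hull yields $\mathrm{NP}_q \succ \mathrm{HS}$.

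For part (iii), the $\Leftarrow$ direction is to show that when $p\equiv 1\bmod \mathrm{lcm}(sd_1,sd_2)$, the lower bounds in Lemma \ref{L:bounds} are attained on the boxed diagonal entries (since then $sd_j\mid p-1$ forces the Artin--Hasse coefficients appearing as the leading monomials to be units modulo $\mathcal{P}$), so the inequality in Lemma \ref{L:bounds2} is an equality and the $k$ smallest numbers in $\mathcal{L}_t$ contribute a nonvanishing term to $C_k$. Conversely, for the $\Rightarrow$ direction, one identifies some specific $k$ for which no $k\times k$ submatrix realizes the bound (congruence obstructions force a strict inequality in Lemma \ref{L:bounds} at every candidate), producing a vertex of $\mathrm{NP}_q$ strictly above $\mathrm{HS}$. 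The degenerate case $(d_1,d_2)=(1,0)$ is handled separately: $\mathrm{HS}$ collapses to a single slope $1-\lambda$ and equality is automatic.

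The main obstacle is part (iii), and in particular the $\Rightarrow$ direction: one must track precisely how the two families of slopes $\{(m+1-\lambda)/d_1\}$ and $\{(m+\lambda)/d_2\}$ interleave as $\lambda$ varies, and identify the smallest vertex where the $p$-adic bounds cannot be simultaneously saturated. This requires a detailed analysis of the leading monomials in each entry $C_{\bigstar}^{m,i}$ of $M_t$ and of how the cycle structure of $\sigma$ prevents the optimal $k\times k$ submatrix from having a unit determinant unless $sd_1\mid p-1$ and $sd_2\mid p-1$ simultaneously.
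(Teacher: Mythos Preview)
Your arguments for (i) and (ii) are essentially the paper's proof: factor $\alpha$ via Lemma~\ref{L:factorization}, apply Lemma~\ref{L:main} and Lemma~\ref{L:bounds2} for (i), then use the trace formula~(\ref{E:trace-formula}) and formula~(\ref{E:delta}) to identify the bound with the twisted Hodge--Stickelberger polygon for (ii). One minor point: the paper does not invoke a functional equation for the high-slope part; it simply notes (citing \cite{Li-Zhu:1}) that $\mathrm{NP}_q(\overline{P},\chi_s^r;\mathbb{F}_q)$ equals the slope~$<1$ part of the Fredholm polygon, which is enough since all reciprocal roots of $L$ have $q$-adic valuation in $[0,1]$.

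For (iii) you take a genuinely different route. The paper does \emph{not} carry out the direct matrix analysis you outline; instead it disposes of both directions by citation. Sufficiency is deduced from Lemma~\ref{L:HS-split}(ii) (when $p\equiv 1\bmod s$ the Hodge--Stickelberger polygon collapses to an ordinary Hodge polygon, and the coincidence $\mathrm{NP}=\mathrm{HP}$ for $p\equiv 1\bmod\mathrm{lcm}(d_1,d_2)$ is already known) together with Remark~\ref{R:2} for the degenerate case $(d_1,d_2)=(1,0)$; necessity is referred entirely to \cite[Theorem~1.1]{Zhu:3}. Your plan---showing the boxed diagonal bounds in Lemma~\ref{L:bounds} are saturated when $sd_j\mid p-1$, and exhibiting an explicit obstructed vertex otherwise---is a correct strategy and would make the proof self-contained, but it is exactly the ``detailed analysis'' you flag as the main obstacle, and the paper deliberately avoids reproducing it. So your approach buys independence from \cite{Zhu:3} at the cost of redoing that work; the paper's approach is shorter but relies on the earlier reference.
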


\begin{proof}
(i) From the decomposition of $\alpha$ in Lemma \ref{L:factorization} we can
apply the results in Lemma \ref{L:main} to $\det(1-\alpha T)$, and we have 
\begin{eqnarray*}
\mathrm{ord} _q(C_k) =\frac{1}{a} \mathrm{ord} _p(C_k) &\geq & \frac{1}{a}%
\sum_{t=0}^{a-1} \inf_{W_t\in {\mathcal{A}} _t} ( \mathrm{ord} _p\det W_t).
\end{eqnarray*}
The result follows from Lemma \ref{L:bounds2}.

(ii) By the trace formula (\ref{E:trace-formula}), we know that $ \mathrm{NP}
_q(\overline{P},\chi_s^r;{\mathbb{F}}_q)$ is identical to the slope $<1$
part of $ \mathrm{NP} _q(1+C_1T+C_2T^2+\cdots)$ (see \cite{Li-Zhu:1}). The
latter can be identified as the condition that $k_1\leq d_1$, $k_2\leq d_2-1$%
. Thus by part (i) the lower bound of $ \mathrm{NP} _q(\overline{P},\chi_s^r;%
{\mathbb{F}}_q)$ is precisely $ \mathrm{HS} ({\mathbb{A}}_{d_1,d_2},\nu,%
\chi_s^r)$ defined in Section \ref{S:3.1} and by (\ref{E:delta}).

(iii) The sufficiency follows from Lemma \ref{L:HS-split} (ii) and Remark %
\ref{R:2} for the case $(d_1,d_2)=(1,0)$. See \cite[Theorem 1.1]{Zhu:3} for
proof of the converse direction.
\end{proof}

\begin{remark}
\label{R:5} The main result in Proposition \ref{P:twisted}(ii) is known to 
\cite[Corollary 3.18]{AS:2} as we noted in Remark \ref{R:4}. We gave a
different proof here in order to be used in the proof of our result in
Proposition \ref{P:1}.
\end{remark}

\section{Asymptotic behavior of $L(\overline{P}/{\mathbb{F}}_q,\protect\chi%
_s^r;T)$}

\label{S:4}

Here again, we assume $s|(q-1)$. Recall that $ \mathrm{NP} _q(\overline{P}%
,\chi_s^r;{\mathbb{F}}_q)$ denotes the $q$-adic Newton polygon of the $L$%
-function $L(\overline{P}/{\mathbb{F}}_q,\chi_s^r;T)$ of twisted exponential
sums. In this section we shall show that for $p$ large enough in a
congruence class mod $s$, this Newton polygon generically converges to the
corresponding twisted Hodge-Stickelberger polygon. (See Proposition \ref{P:1}
for precise statement.) Below we briefly outline our approach, which is very
similar to that in \cite[Sections 4,5]{Li-Zhu:1} and hence we do not
elaborate.

We fix some integer $k$ with $1\leq k\leq d_1+d_2$ in the following, and we
write $k=k_1+k_2$ as in Section \ref{S:3.3}. Let $M_{t,1}^{[k_1]}$ (\textit{%
resp.} $M_{t,2}^{[k_2]}$) denote the $k_1\times k_1$ (\textit{resp.} $%
k_2\times k_2$) submatrix of $M_t$ defined by 
\begin{equation*}
M_{t,1}^{[k_1]}=\left((C_{1,1}^{m,i})_{1\leq m,i\leq k_1}\right)
~(resp.~M_{t,2}^{[k_2]}=\left((C_{2,2}^{m,i})_{0\leq m,i\leq k_2-1}\right)).
\end{equation*}

In this paper we should sometimes consider the coefficients $a_i$ of $P(x)$
as variables and denote them by the vector $\vec{a}$. The \emph{weight} of a
monomial $\prod_{i=-d_2}^{d_1} a_i^{n_i}$ in $K[\vec{a}]$ is equal to $%
\sum_{i=-d_2}^{d_1} |i|n_i$. A relevant example in this section is that the
minimal weight monomials in $H^{m,i}$ (defined under Lemma \ref%
{L:factorization}) are of weight $|m-i|$; and hence the minimal weight
monomials in $B^{m,i}_t$ are of weight $|mp-K_t-i|$. All minimal weight
monomials in (the formal expansion of) $\det M_t(\vec{a})$ lie in $\gamma^{{%
\mathbb{Q}}}{\mathbb{Q}}[\vec{a}]$. As shown in \cite[Proposition 3.8]%
{Li-Zhu:1}, one can find a monomial in $\det M_{t,1}^{[k_1]}$ (\textit{resp.}
$\det M_{t,2}^{[k_2]}$) of minimal weight that does not cancel out with
others terms. Moreover, the $p$-adic order $s_{1,t}$ (\textit{resp.} $s_{2,t}
$) of the coefficient of this monomial is minimal among the $p$-adic orders
of all monomials in $\det M_{t,1}^{[k_1]}$ (\textit{resp.} $\det
M_{t,2}^{[k_2]}$). This monomial corresponds to a permutation $\rho_{1,t}$ (%
\textit{resp.} $\rho_{2,t}$) in the permutation group $S_{k_1}$ (\textit{%
resp.} $S_{k_2}$). For $J=1,2$, let $r_{J,i,j}$ be the least nonnegative
residue of $-(pi-j) \bmod d_J$. Then we have 
\begin{eqnarray*}
s_{1,t} &=&\frac{k_1(k_1+1)}{2d_1}-\frac{k_1K_t}{d_1(p-1)} +\frac{1}{d_1(p-1)%
}\sum_{i=1}^{k_1}r_{1,i,\rho_{1,t}(i)+K_t}; \\
s_{2,t} &=&\frac{k_2(k_2-1)}{2d_2}+\frac{k_2K_t}{d_2(p-1)} +\frac{1}{d_2(p-1)%
}\sum_{i=0}^{k_2-1}r_{2,i,\rho_{2,t}(i)-K_t}.
\end{eqnarray*}
For each fixed $k$ let 
\begin{eqnarray}  \label{E:s_k}
s_k &:=& \frac{1}{a}\sum_{t=0}^{a-1}(s_{1,t}+s_{2,t}) =\frac{k_1(k_1-1)}{2d_1%
} + \frac{k_2(k_2-1)}{2d_2} +\epsilon_{k,p}
\end{eqnarray}
where 
\begin{equation*}
\epsilon_{k,p}: = \frac{1}{a(p-1)d_1}\sum_{t=0}^{a-1}\sum_{i=1}^{k_1}r_{1,i,%
\rho_{1,t}(i)+K_t} +\frac{1}{a(p-1)d_2}\sum_{t=0}^{a-1}%
\sum_{i=1}^{k_2}r_{2,i,\rho_{2,t}(i)-K_t}. 
\end{equation*}

Let $M_t^{[k]}$ be the $k\times k$ submatrix of $M_t$ defined by the block
matrix 
\begin{equation*}
M_t^{[k]} = \left(%
\begin{array}{l|l}
(C_{1,1}^{m,i})_{1\leq m,i \leq k_1} & (C_{1,2}^{m,i})_{1\leq m \leq
k_1,~0\leq i \leq k_2-1} \\ 
& (C_{2,1}^{m,i})_{0\leq m \leq k_2-1,~1\leq i \leq k_1} \\ \hline
&  \\ 
(C_{2,1}^{m,i})_{0\leq m \leq k_2-1,~1\leq i \leq k_1} & (C_{2,2}^{m,i})_{0%
\leq m,i \leq k_2-1} \\ 
& 
\end{array}%
\right). 
\end{equation*}
Then the terms of minimal valuation in the expansion of $\det M_t^{[k]}$
come from the product $\det M_{t,1}^{[k_1]}\cdot \det M_{t,2}^{[k_2]}$; they
have $p$-adic order equal to $s_{1,t}+s_{2,t}$. The product of the terms of
minimal $p$-adic order in each of the $\det M_t^{[k]}$ gives precisely the
lowest $\gamma$-power term in $\prod_{t=0}^{a-1} \det M_t^{[k]}$. This term
can be written as a product $\gamma^{a(p-1)s_k} U G_{\nu,r}$ for some $p$%
-adic unit $U$ and some $G_{\nu,r}\in {\mathbb{Q}}[\vec{a}]$ which becomes
independent of $p$ when $p$ is large enough. Note that $G_{\nu,r}$ is
nonconstant since it contains a unique monomial corresponding to the
permutation $\rho_t$ in $S_k$ by composing $\rho_{1,t}\in S_{k_1}$ and $%
\rho_{2,t}\in S_{k_2}$ in the obvious way. Let $ {\mathcal{U}} _{\nu,r}$ be
the subspace of ${\mathbb{A}}_{d_1,d_2}$ defined by $G_{\nu,r}\neq 0$. Hence 
$ {\mathcal{U}} _{\nu,r}$ over ${\mathbb{Q}}$ is open dense in ${\mathbb{A}}%
_{d_1,d_2}$.

\begin{lemma}
\label{L:a-bounds} Write $\det(1-\alpha T) = \sum_{j=0}^{\infty}C_j T^j$.
Fix $1\leq k\leq d_1+d_2$. For $p$ large enough, we have 
\begin{eqnarray}  \label{E:C_k}
C_k & \equiv & \prod_{t=0}^{a-1} \det M_t^{[k]} \bmod \gamma^{>a(p-1)s_k};
\end{eqnarray}
furthermore (with $p$ still large enough), we have 
\begin{eqnarray}  \label{E:ord_q(C_k)}
\mathrm{ord} _q (C_k) &=& \frac{1}{a}\sum_{t=0}^{a-1} \mathrm{ord} _p\det
M_t^{[k]} = s_k
\end{eqnarray}
if and only if $\overline{P}(x) \in  {\mathcal{U}} _{\nu,r}({\mathbb{F}}_q)$.
\end{lemma}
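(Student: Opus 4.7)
The plan is to combine the transfer trick of Lemma \ref{L:transfer} with a principal-minor expansion (as in the proof of Lemma \ref{L:main}), and then isolate a single dominant term among the resulting combinatorial sum. By Lemma \ref{L:factorization}, the operator $\alpha$ is represented in the basis $\vec b$ by $M:=M_{a-1}\cdots M_0$, so $C_k$ is the coefficient of $T^k$ in $\det(1-MT)$. Applying Lemma \ref{L:transfer} (together with its proof), this coefficient equals the coefficient of $T^{ak}$ in $\det(1-\vec{M}_{[a]}T)$, which by principal-minor expansion equals
\[
C_k \;=\; \sum_{\mathbf{I}} \varepsilon_{\mathbf{I}} \prod_{t=0}^{a-1} \det (M_t)_{I_{t+1},I_t},
\]
the sum running over all tuples $\mathbf{I}=(I_0,\ldots,I_{a-1})$ of size-$k$ index sets in $\vec b$, with signs $\varepsilon_{\mathbf{I}}\in\{\pm 1\}$.

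I then single out the canonical tuple $\mathbf{I}^{\mathrm{can}}$ in which every $I_t$ equals the initial set $\{Z_1,\ldots,Z_1^{k_1},1,Z_2,\ldots,Z_2^{k_2-1}\}$: its contribution is precisely $\prod_t \det M_t^{[k]}$. From the analysis recalled just before the lemma (patterned on \cite[Proposition 3.8]{Li-Zhu:1}), the block factors $\det M_{t,1}^{[k_1]}$ and $\det M_{t,2}^{[k_2]}$ each contain a non-cancelling minimum-weight monomial of $p$-adic orders $s_{1,t}$ and $s_{2,t}$, realized by the permutations $\rho_{1,t}$ and $\rho_{2,t}$; multiplying across $t$ yields a leading $\gamma$-power term of the form $\gamma^{a(p-1)s_k}\,U\,G_{\nu,r}(\vec a)$, with $U$ a $p$-adic unit and $G_{\nu,r}$ the (eventually $p$-independent) polynomial introduced above the lemma.

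The heart of the argument, and its main obstacle, is to show that for $p$ large every non-canonical tuple $\mathbf{I}\neq\mathbf{I}^{\mathrm{can}}$ contributes with $\gamma$-valuation strictly greater than $a(p-1)s_k$. The estimates of Lemma \ref{L:bounds} force $\mathrm{ord}_p \det(M_t)_{I_{t+1},I_t}$ to exceed $s_{1,t}+s_{2,t}$ whenever $I_t$ or $I_{t+1}$ deviates from the canonical index set; the gain comes from one of three sources: the off-diagonal penalty $\frac{i}{p-1}(\frac{1}{d_1}+\frac{1}{d_2})$ in the $Z_1/Z_2$-coupling blocks, the use of higher monomials within a block, or a non-canonical splitting of $k=k_1+k_2$. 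One must then track these gains across all indices $t$ and show they collectively dominate the $O(1/(p-1))$ discrepancy $\tfrac{1}{a}\sum_t(s_{1,t}+s_{2,t}-\delta_t^{(k)})$. This bookkeeping is structurally identical to \cite[Sections 4--5]{Li-Zhu:1}; the only extra ingredient is the handling of the twist shifts $K_t$ arising from the factor $X^{(q-1)r/s}$ in $H(X)$, which translate rows and columns uniformly and so do not disrupt the comparison.

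Combining these observations yields (\ref{E:C_k}), since all non-canonical contributions are absorbed into $\gamma^{>a(p-1)s_k}$. Assertion (\ref{E:ord_q(C_k)}) then follows immediately: the leading $\gamma$-coefficient of $C_k$ is the $p$-adic unit $U$ times $G_{\nu,r}(\overline{\vec a})$, so $\mathrm{ord}_q C_k = s_k$ holds if and only if $G_{\nu,r}(\overline{\vec a})\neq 0$, that is, if and only if $\overline P \in \mathcal{U}_{\nu,r}(\mathbb{F}_q)$.
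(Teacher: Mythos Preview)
Your approach is essentially the same as the paper's: both expand $C_k$ via Lemmas~\ref{L:transfer} and~\ref{L:main} as a signed sum over $ak\times ak$ principal submatrices of $\vec{M}_{[a]}$, isolate the canonical term $\prod_t \det M_t^{[k]}$, and then argue that every other term has strictly larger valuation for large $p$ by invoking the row/column asymmetry of Lemma~\ref{L:bounds} together with the detailed bookkeeping of \cite[Sections~4--5]{Li-Zhu:1}. The paper makes one point slightly more explicit than you do: because $N$ is \emph{principal}, the row index set of $N_t$ equals the column index set of $N_{t+1}$, so a column deviation at level $t$ forces a row deviation at level $t-1$, and it is this single row deviation---with its gain $\geq 1/d_1$ or $1/d_2$ bounded away from zero---that dominates, rather than an accumulation of $O(1/(p-1))$ column gains; but since you defer to \cite{Li-Zhu:1} for exactly this step, your sketch is sound.
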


\begin{proof}
It is clear that the $k\times k$ submatrix of $M_t$ whose determinant has
the smallest $p$-adic valuation shares the rows of $M_t^{[k]}$. Let $N$ be
any $ak\times ak$ principal submatrix in $\vec{M}_{[a]}$. Let $N_t$ be the
intersection of $N$ and $M_t$ as submatrix of $\vec{M}_{[a]}$ for all $0\leq
t\leq a-1$. We may well assume that $N_t$ is $k\times k$ matrix as in the
proof of Lemma \ref{L:main}. Now suppose for some $t$ we have $N_t \neq
M_t^{[k]}$ share the rows of $M_t^{[k]}$. Observe that row indices of $N_t$
are equal to the column indices or $N_{t+1}$ because $N$ is principal. Note
that in fact we consider the subindices modulo $a$. Since $N_t$ has at least
one column outside of the columns of $M_t^{[k]}$, we have that $N_{t-1}$ has
at least one row outside of the rows of $M_{t-1}^{[k]}$. Recall that the
difference between minimal row valuations in $M_t$ is $\geq 1/d_1$ (resp. $%
\geq 1/d_2$) as $p$ is large enough, depending on the location of the row in
the matrix blocks. In comparison, the difference between minimal column
valuations in $M_t$ is convergent to $0$ as $p$ approaches $\infty$. As $%
p\rightarrow \infty$, we have by the same argument as that in \cite[Sections
4,5]{Li-Zhu:1}. $ \mathrm{ord} _p (\det N) > a s_k. $ As $C_k$ is the
infinite sum of $\pm \det N$ as $N$ ranges over all such $ak\times ak$
principal submatrices in $\vec{M}_{[a]}$, the above inequality yields our
first congruence relation in (\ref{E:C_k}).

Note that $ \mathrm{ord} _p \det M_t^{[k]} \geq a s_k$ with equality holds
if and only if $\overline{P}\in {\mathcal{U}} _{\nu,r}({\mathbb{F}}_q)$ by
the paragraph above this lemma. Combined with the congruence relation in (%
\ref{E:C_k}), our second assertion in (\ref{E:ord_q(C_k)}) follows.
\end{proof}

Let $ \mathrm{GNP} ({\mathbb{A}}_{d_1,d_2},\chi_s^r;\overline{\mathbb{F}}_p)$
be the generic Newton polygon of twisted exponential sums over $\overline{%
\mathbb{F}}_p$, namely, 
\begin{eqnarray}
\mathrm{GNP} ({\mathbb{A}}_{d_1,d_2},\chi_s^r;\overline{\mathbb{F}}_p) &=&
\inf_{\overline{P}} \; \mathrm{NP} _q(\overline{P}(x),\chi_s^r;{\mathbb{F}}%
_q)
\end{eqnarray}
where $\overline{P}$ ranges over all Laurent polynomials in ${\mathbb{A}}%
_{d_1,d_2}({\mathbb{F}}_q)$ for all ${\mathbb{F}}_q$ in $\overline{\mathbb{F}%
}_p$. This minimum exists by Grothendieck specialization theorem (see \cite%
{Katz} or \cite{Wan:2}).

To simplify notations, we abbreviate $\lim_{p\rightarrow\infty,p\equiv\nu%
\bmod s}(\cdot)$ by $\lim_\nu(\cdot)$.

\begin{proposition}
\label{P:1} Let notations be as above. Fix $s\geq 1$ and $1\leq \nu\leq s-1$
coprime to $s$. Let $0\leq r\leq s-1$.\newline
(a) For $p\equiv \nu\bmod s$ large enough (depending only on $%
d_1,d_2,\nu,\chi_s^r$) we have

\begin{enumerate}
\item[(i)] $ \mathrm{GNP} ({\mathbb{A}}_{d_1,d_2},\chi_s^r;\overline{\mathbb{%
F}}_p)$ exists and it is given by the vertex points $(k,s_k)_{0\leq k\leq
d_1+d_2}$.

\item[(ii)] we have 
\begin{equation*}
\mathrm{NP} (P(x)\bmod  {\mathcal{P}} ,\chi_s^r) \succ  \mathrm{GNP} ({%
\mathbb{A}}_{d_1,d_2},\chi_s^r;\overline{\mathbb{F}}_p) 
\end{equation*}
for any prime $ {\mathcal{P}} $ over $p$ in $\overline{\mathbb{Q}}$; these
two polygons coincide if and only if $P\in  {\mathcal{U}} _{\nu,r}(\overline{%
\mathbb{Q}})$.
\end{enumerate}

(b) For every $P(x)\in {\mathcal{U}} _{\nu,r}(\overline{{\mathbb{Q}}})$ we
have that 
\begin{equation*}
\lim_\nu  \mathrm{NP} (P\bmod  {\mathcal{P}} ,\chi_s^r) =  \mathrm{HS} ({%
\mathbb{A}}_{d_1,d_2},\nu,\chi_s^r) 
\end{equation*}
for any prime $ {\mathcal{P}} $ over $p$ in $\overline{\mathbb{Q}}$.
\end{proposition}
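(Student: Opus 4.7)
The plan is to combine Lemma~\ref{L:a-bounds} with an analysis of how the quantities $s_k$ and the open set $\mathcal{U}_{\nu,r}$ behave as $p\to\infty$ in the residue class $\nu\bmod s$.

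For part~(a)(i), the generic Newton polygon exists by Grothendieck specialization applied to the family of $L$-functions indexed by $\mathbb{A}_{d_1,d_2}$. To identify it with the polygon on the vertex set $(k,s_k)_{0\leq k\leq d_1+d_2}$, I would first check that this sequence is convex for $p$ sufficiently large. This rests on a direct inspection of the explicit formula~(\ref{E:s_k}) for $s_k$: the successive differences $s_{k+1}-s_k$ are, up to an $O(1/p)$ correction coming from $\epsilon_{k,p}$, equal to the twisted Hodge-Stickelberger slopes $\frac{m+1-\lambda}{d_1}$ and $\frac{m+\lambda}{d_2}$ arranged in non-decreasing order. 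Since the limiting slope sequence is convex, the perturbed sequence remains convex for $p$ large enough. Lemma~\ref{L:a-bounds} then shows that $\mathrm{ord}_q(C_k)=s_k$ precisely when $\overline{P}\in \mathcal{U}_{\nu,r}(\mathbb{F}_q)$, so the generic Newton polygon is pinned at exactly these vertices. Part~(a)(ii) is then immediate: the lower bound holds by the definition of $\mathrm{GNP}$, and the equality characterization at each vertex is exactly Lemma~\ref{L:a-bounds}. (If the polynomial $G_{\nu,r}$ produced in that lemma a priori depends on $k$, one intersects the resulting Zariski open sets over the finitely many values of $k$ to obtain a common $\mathcal{U}_{\nu,r}$.)

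For part~(b), take $P\in \mathcal{U}_{\nu,r}(\overline{\mathbb{Q}})$. Since $G_{\nu,r}$ stabilizes to an element of $\mathbb{Q}[\vec{a}]$ independent of $p$ for $p$ large, the condition $G_{\nu,r}(P)\neq 0$ in $\overline{\mathbb{Q}}$ persists after reduction modulo $\mathcal{P}$ for all but finitely many $p$; hence $\overline{P}=P\bmod \mathcal{P}\in \mathcal{U}_{\nu,r}(\mathbb{F}_q)$ eventually. By part~(a), $\mathrm{NP}(P\bmod \mathcal{P},\chi_s^r)$ then has vertices $(k,s_k)$. Letting $p\to\infty$ with $p\equiv \nu\bmod s$, the perturbation $\epsilon_{k,p}$ is bounded above by a term of order $\frac{k_1(d_1-1)+k_2(d_2-1)}{(p-1)\min(d_1,d_2)}$, which tends to $0$, whereas $\lambda$ is invariant in this residue class; hence
\[
\lim_\nu s_k=\frac{k_1(k_1-1)}{2d_1}+\frac{k_1(1-\lambda)}{d_1}+\frac{k_2(k_2-1)}{2d_2}+\frac{k_2\lambda}{d_2},
\]
which is exactly the $k$-th vertex of $\mathrm{HS}(\mathbb{A}_{d_1,d_2},\nu,\chi_s^r)$ by~(\ref{E:delta}).

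The main obstacle is the convexity step for the sequence $(k,s_k)$ together with the stability of the optimal split $k=k_1+k_2$ as $p$ grows (the split is forced by the ordering of $\mathcal{L}_t$, and must be verified to be the same for each $t$ once $p$ is large). Once these are handled by direct examination of~(\ref{E:s_k}), the remainder reduces to Lemma~\ref{L:a-bounds} and straightforward limit bookkeeping, in the same spirit as the asymptotic analysis of \cite[Sections~4,5]{Li-Zhu:1}.
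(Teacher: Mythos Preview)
Your proposal is correct and follows essentially the same route as the paper: both arguments hinge on Lemma~\ref{L:a-bounds} for the equality $\mathrm{ord}_q(C_k)=s_k$ on $\mathcal{U}_{\nu,r}$, and both finish by letting $\epsilon_{k,p}\to 0$ to recover the Hodge--Stickelberger vertices. The paper is terser on the convexity of $(k,s_k)$ and the stability of the split $k=k_1+k_2$, which you rightly flag as points requiring verification, but these are handled implicitly there by the observation that $s_k$ approaches the convex limit from the right.
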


\begin{proof}
(a) Notice that $\varepsilon_{k,p}\rightarrow 0+$ as $p\rightarrow\infty$,
so for $p\rightarrow \infty$, we have 
\begin{eqnarray*}
s_k &\longrightarrow & \frac{k_1(k_1-1)}{2d_1}+\frac{k_1(1-\lambda)}{d_1} + 
\frac{k_2(k_2-1)}{2d_2}+\frac{k_2\lambda}{d_1}
\end{eqnarray*}
from the right. Thus $ \mathrm{GNP} ({\mathbb{A}}_{d_1,d_2},\chi_s^r;%
\overline{\mathbb{F}}_p)$ is indeed given by vertices with coordinates $%
(k,s_k)$ for $0\leq k\leq d_1+d_2$. This proves (i).

Consider the previous lemma \ref{L:a-bounds} and suppose $p$ large enough as
given there. We have $ \mathrm{ord} _q C_k(\vec{a}) \geq s_k$ and the
equality holds if and only if $P\in {\mathcal{U}} _{\nu,r}(\overline{\mathbb{%
Q}})$. On the other hand, for $p$ large enough, $ \mathrm{NP} _q(\overline
P,\chi_s^r;{\mathbb{F}}_q)$ coincides with the $q$-adic Newton polygon of $%
\sum_{j=0}^{d_1+d_2-1}C_jT^j = \det(1-\alpha T) \bmod T^{d_1+d_2}$. Thus $ 
\mathrm{NP} (P\bmod  {\mathcal{P}} ,\chi_s^r) =  \mathrm{NP} _q(\overline{P}%
,\chi_s^r;{\mathbb{F}}_q) \succ  \mathrm{GNP} ({\mathbb{A}}%
_{d_1,d_2},\chi_s^r;\overline{\mathbb{F}}_p)$ and they coincide if and only
if $P\in  {\mathcal{U}} _{\nu,r}(\overline{\mathbb{Q}})$. This proves (ii).

(b) From part (a) we know that for $P \in {\mathcal{U}} _{\nu,r}(\overline{%
\mathbb{Q}})$ the Newton polygon coincides with the generic Newton polygon,
but the latter converges to the Hodge-Stickelberger polygon as $p$
approaches infinity by looking at the limit of $s_k$. This proves (b).
\end{proof}

\section{Newton polygons for Laurent polynomials $\overline{P}(x^s)$.}

\label{S:5}

We shall prove the main theorems in this section. Let $\chi_s$ be a
multiplicative character of order $s$ on ${\mathbb{F}}_{q^k}^\times$. Set $%
n:=\gcd(s,q^k-1)$. Let $\chi_n:=\chi_s^{s/n}$ be a multiplicative character
of ${\mathbb{F}}_{q^k}^\times$ of order $n$.

\begin{lemma}
\label{L:above} Then we have 
\begin{equation*}
S_k(\overline{P}(x^s)) =\sum_{r=0}^{n-1}S_k(\overline{P},\chi_n^r) =
\sum_{r^{\prime}\in\{0,\ldots,s-1\},\frac{s}{n}|r^{\prime}} S_k(\overline{P}%
,\chi_s^{r^{\prime}}) . 
\end{equation*}
\end{lemma}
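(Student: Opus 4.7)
The plan is to rewrite $S_k(\overline{P}(x^s))$ by pushing the sum forward along the $s$-th power map $x \mapsto x^s$ on $\mathbb{F}_{q^k}^\times$, then use character orthogonality to recover the twisted sums. Set $G = \mathbb{F}_{q^k}^\times$, a cyclic group of order $q^k-1$, and $n = \gcd(s, q^k-1)$. The map $G \to G$, $x \mapsto x^s$, has image $G^s = G^n$ (the subgroup of $n$-th powers, of index $n$) and every fibre over an element of $G^n$ has cardinality $n$.

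First I would write
\begin{equation*}
S_k(\overline{P}(x^s)) = \sum_{x \in G} \psi_{q^k}(\overline{P}(x^s)) = \sum_{y \in G} \#\{x \in G : x^s = y\}\, \psi_{q^k}(\overline{P}(y)),
\end{equation*}
so the whole computation reduces to identifying the fibre-counting function with a sum of multiplicative characters. Since $\chi_n$ has exact order $n$ on $G$ (its image has order $n$ because $\chi_n = \chi_s^{s/n}$ and $\chi_s$ has order $s$), the characters $\chi_n^0, \chi_n^1, \ldots, \chi_n^{n-1}$ are precisely the characters of $G/G^n$ pulled back to $G$. Standard orthogonality then gives
\begin{equation*}
\sum_{r=0}^{n-1} \chi_n^r(y) = \begin{cases} n & \text{if } y \in G^n, \\ 0 & \text{otherwise,} \end{cases}
\end{equation*}
which matches $\#\{x \in G : x^s = y\}$ exactly, using $G^s = G^n$.

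Substituting this identity back and interchanging the finite sums yields
\begin{equation*}
S_k(\overline{P}(x^s)) = \sum_{r=0}^{n-1} \sum_{y \in G} \chi_n^r(y)\, \psi_{q^k}(\overline{P}(y)) = \sum_{r=0}^{n-1} S_k(\overline{P}, \chi_n^r),
\end{equation*}
which is the first claimed equality. For the second equality, I would simply reindex: because $\chi_n = \chi_s^{s/n}$, we have $\chi_n^r = \chi_s^{rs/n}$, and as $r$ ranges over $\{0, 1, \ldots, n-1\}$ the integer $r' = rs/n$ ranges bijectively over exactly those $r' \in \{0, 1, \ldots, s-1\}$ divisible by $s/n$.

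The only point requiring any care is the verification $G^s = G^n$ (so that the fibre-count really is detected by $\chi_n$ rather than some character of larger order), but this is immediate from $n = \gcd(s, q^k-1)$ in the cyclic group $G$: writing $s = n\cdot(s/n)$ with $\gcd(s/n, (q^k-1)/n) = 1$, the map $y \mapsto y^{s/n}$ is a bijection of $G^n$. There is no genuine obstacle; the statement is essentially a Fourier-analytic unpacking of the $s$-th power map.
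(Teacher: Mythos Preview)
Your proof is correct and follows essentially the same approach as the paper's: push forward along the $s$-th power map, identify the fibre-count on $\mathbb{F}_{q^k}^\times$ with the character sum $\sum_{r=0}^{n-1}\chi_n^r$ via orthogonality, interchange sums, and then reindex using $\chi_n=\chi_s^{s/n}$. The paper phrases the key step slightly differently---factoring $s=mn$ and noting that $x\mapsto x^m$ is a bijection so that one reduces to the $n$-th power map---whereas you argue directly that $G^s=G^n$; these are equivalent and equally short.
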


\begin{proof}
By hypothesis, we may factor $s$ as a product of two integers $s=mn$. Since $%
\gcd(m,q^k-1)=1$, the map $x\mapsto x^{m}$ is bijective on ${\mathbb{F}}%
_{q^k}^\times$. On the other hand, since $n|q^k-1$, the kernel of the map $%
x\mapsto x^n$ is the set of $n$-th roots of unity, and its image is the set $%
({\mathbb{F}}_{q^k}^\times)^n$ of $n$-th powers in ${\mathbb{F}}_{q^k}^\times
$. Thus we get 
\begin{equation*}
S_k(\overline{P}(x^s))=\sum_{x\in {\mathbb{F}}_{q^k}^\times}\psi_{q^k}(%
\overline{P}(x^s)) =\sum_{x\in ({\mathbb{F}}_{q^k}^\times)^n} n\psi_{q^k}(%
\overline{P}(x)). 
\end{equation*}
{}From the orthogonality relations on multiplicative characters, we have
that $\sum_{r=0}^{n-1}\chi_n(x^r) = n$ if $x\in ({\mathbb{F}}_{q^k}^\times)^n
$ and $=0$ otherwise. Then the above equation becomes 
\begin{equation*}
\begin{array}{rcl}
S_k(\overline{P}(x^s)) & = & \sum_{x\in {\mathbb{F}}_{q^k}^\times}
\sum_{r=0}^{n-1}\chi_n(x^r) \psi_{q^k}(\overline{P}(x)) \\ 
& = & \sum_{r=0}^{n-1}\sum_{x\in {\mathbb{F}}_{q^k}^\times} \chi_n(x^r)
\psi_{q^k}(\overline{P}(x)) \\ 
& = & \sum_{r=0}^{n-1}S_k(\overline{P},\chi_n^r).%
\end{array}%
\end{equation*}
The last equation is straightforward.
\end{proof}

Observe that if $s|(q^k-1)$, then we have $S_k(\overline{P}(x^s)) =
\sum_{r=0}^{s-1}S_k(\overline{P},\chi_s^r) . $

Consider the permutation $\sigma^a$ on $\{0,\dots,s-1\}$, namely the
permutation induced by multiplication of $q=p^a$ modulo $s$. Its cycle
decomposition (including $1$-cycles) is further splitting of that of $\sigma$
as $\sigma^a = \prod_{i=1}^{u}\sigma_i^a= \prod_{i=1}^{u}
\prod_{j=1}^{\ell_i/\ell_i^{\prime}}\sigma_{ij}$ for $\ell^{\prime}_i$%
-cycles $\sigma_{ij}$, where $\ell_i^{\prime}|\ell_i$. Namely each
permutation $\sigma_i^a$ splits into $\ell_i/\ell_i^{\prime}$ many cycles of
equal length $\ell_i^{\prime}$.

Consider $\overline{P}\in{\mathbb{A}}_{d_1,d_2}({\mathbb{F}}_q)$ as $%
\overline{P}/{\mathbb{F}}_{q^{\ell^{\prime}_i}}$ for $i=1,\ldots,u$. It is
clear that $s|(q^{\ell_i^{\prime}}-1)$.

For any cycle $\sigma_i$ in the decomposition of $\sigma$ (including $1$%
-cycles), define 
\begin{eqnarray}  \label{E:L-twist2}
L_i(T) &:=& \prod_{j=1}^{\ell_i/\ell_i^{\prime}} L(\overline{P}/{\mathbb{F}}%
_{q^{\ell^{\prime}_i}},\chi_s^{r_{ij}};T^{\ell^{\prime}_i})
\end{eqnarray}
where $r_{ij}$ is an element in $\sigma_{ij}$. This is a polynomial in ${%
\mathbb{Z}}[\zeta_p,\zeta_s][T]$ of degree $\ell_i(d_1+d_2)$.

\begin{lemma}
\label{L:L-functionsplit} We have 
\begin{eqnarray}  \label{E:L1}
L(\overline{P}(x^s);T) &=&\prod_{i=1}^{u} L_i(T).
\end{eqnarray}
\end{lemma}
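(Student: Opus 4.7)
The plan is to take logarithms of both sides of (\ref{E:L1}) and compare coefficients of $T^k/k$. First I would expand
\[
\log \prod_{i=1}^u L_i(T) = \sum_{i=1}^u \sum_{j=1}^{\ell_i/\ell_i'} \sum_{m \geq 1} S_m^{(\ell_i')}(\overline{P},\chi_s^{r_{ij}}) \frac{T^{m\ell_i'}}{m},
\]
where $S_m^{(\ell_i')}(\overline{P},\chi_s^{r_{ij}})$ denotes the $m$-th twisted exponential sum of $\overline{P}$ regarded as a Laurent polynomial over $\mathbb{F}_{q^{\ell_i'}}$. Setting $k = m \ell_i'$ and collecting powers of $T$, the coefficient of $T^k/k$ on the right becomes
\[
\sum_{(i,j):\,\ell_i'\mid k} \ell_i'\cdot S_{k/\ell_i'}^{(\ell_i')}(\overline{P},\chi_s^{r_{ij}}),
\]
so the task reduces to showing that this quantity equals $S_k(\overline{P}(x^s))$ for every $k \geq 1$.

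The key input is Lemma \ref{L:above} applied over the base field $\mathbb{F}_{q^k}$: with $n = \gcd(s,q^k-1)$, one has $S_k(\overline{P}(x^s)) = \sum_{r':\,s/n\mid r'} S_k(\overline{P},\chi_s^{r'})$. The condition $s/n \mid r'$ is equivalent to $q^k r' \equiv r' \bmod s$, i.e., to $r'$ being fixed by $\sigma^{ak}$; this happens precisely when $r'$ lies in some cycle $\sigma_{ij}$ of $\sigma^a$ whose length $\ell_i'$ divides $k$. Grouping the summation by orbits then yields
\[
S_k(\overline{P}(x^s)) = \sum_{(i,j):\,\ell_i'\mid k}\ \sum_{r'\in\sigma_{ij}} S_k(\overline{P},\chi_s^{r'}).
\]

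Next I would establish the Frobenius invariance $S_k(\overline{P},\chi_s^{qr}) = S_k(\overline{P},\chi_s^r)$: since $\overline{P}$ has coefficients in $\mathbb{F}_q$, one has $\overline{P}(x)^q = \overline{P}(x^q)$, and $\psi_{q^k}$ is Galois-invariant under $x \mapsto x^q$; the substitution $y = x^q$ on $\mathbb{F}_{q^k}^\times$ gives the equality. Consequently each $r' = q^t r_{ij} \in \sigma_{ij}$ contributes the same value, so $\sum_{r'\in\sigma_{ij}} S_k(\overline{P},\chi_s^{r'}) = \ell_i' \cdot S_k(\overline{P},\chi_s^{r_{ij}})$. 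Finally, when $\ell_i' \mid k$ we have $\mathbb{F}_{q^k} = \mathbb{F}_{(q^{\ell_i'})^{k/\ell_i'}}$, and the norm-compatibility used in the definition of $\chi_s$ across base fields identifies $S_k(\overline{P},\chi_s^{r_{ij}})$ with $S_{k/\ell_i'}^{(\ell_i')}(\overline{P},\chi_s^{r_{ij}})$. Combining the three steps matches the two formal power series coefficient by coefficient, which gives (\ref{E:L1}).

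The main obstacle will be keeping the character identifications straight: the character $\chi_s^{r_{ij}}$ appears on the multiplicative groups of several different base fields ($\mathbb{F}_{q^{\ell_i'}}$ and $\mathbb{F}_{q^k}$), and one must verify that the norm-compatibility used in extending $\chi_s$ from $\mathbb{F}_{q^{\ell_i'}}$ to $\mathbb{F}_{q^k}$ recovers exactly the character $\chi_s^{r_{ij}}$ appearing in Lemma \ref{L:above}. Once this bookkeeping is done, the argument is a formal manipulation of the logarithmic generating series.
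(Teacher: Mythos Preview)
Your proposal is correct and follows essentially the same approach as the paper: both arguments reduce to the coefficient identity $S_k(\overline{P}(x^s)) = \sum_{(i,j):\ell_i'\mid k}\ell_i'\,S_k(\overline{P},\chi_s^{r_{ij}})$ by combining Lemma~\ref{L:above}, the Frobenius invariance $S_k(\overline{P},\chi_s^{qr})=S_k(\overline{P},\chi_s^{r})$, and the equivalence of the conditions $\frac{s}{n}\mid r'$ and $\ell_i'\mid k$ via $\sigma^{ak}(r')=r'$. You are somewhat more explicit than the paper about the base-change step identifying $S_k(\overline{P},\chi_s^{r_{ij}})$ over $\mathbb{F}_q$ with $S_{k/\ell_i'}^{(\ell_i')}(\overline{P},\chi_s^{r_{ij}})$ over $\mathbb{F}_{q^{\ell_i'}}$, which the paper absorbs into ``some elementary computation''; your caution there is warranted but does not constitute a different method.
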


\begin{proof}
Since $x\mapsto x^q$ is an automorphism of ${\mathbb{F}}_{q^n}^\times$ for
any $n$ and $\overline{P}(x^q)=\overline{P}(x)^q$, we have 
\begin{eqnarray*}
S_k(\overline{P}(x),\chi_n^{r}) &=& S_k(\overline{P}(x^q),\chi_n^r) \\
&=& \sum_{x\in {\mathbb{F}}_{q^k}^\times}\chi_n^r (x^{q}) \psi_{q^k}(%
\overline{P}(x^q)) \\
&=& \sum_{x\in{\mathbb{F}}_{q^k}^\times}\chi_n^{r}(x^q) \psi_{q^k}(\overline{%
P}(x)).
\end{eqnarray*}
This shows that $S_k(\overline{P},\chi_n^r)= S_k(\overline{P},\chi_n^{qr}).$
Consequently the sum in Lemma \ref{L:above} may be broken down into orbits
of $\sigma^a$. Recall $\sigma^a=\prod_{i=1}^{u}\prod_{j=1}^{\ell_i/\ell_i^{%
\prime}}\sigma_{ij}$. Let $r_{ij}$ be an element in $\sigma_{ij}$. Since $%
\ell_i^{\prime}|k$ is the same as saying $\sigma^{ak}(r_{ij})= r_{ij}$, that
is $q^kr_{ij}\equiv r_{ij}\bmod s$. But $s | (q^k-1)r_{ij}$ (combined with
our hypothesis $n=\gcd(s,q^k-1)$) is equivalent to $\frac{s}{n}|r_{ij}$.
Thus the sum in Lemma \ref{L:above} can be phrased as 
\begin{equation*}
S_k(\overline{P}(x^s)) =\sum_{r_{ij},\frac{s}{n}|r_{ij}} \ell_i^{\prime}\;
S_k(\overline{P},\chi_s^{r_{ij}}) =\sum_{r_{ij},\ell_i^{\prime}|k}
\ell_i^{\prime}\; S_k(\overline{P},\chi_s^{r_{ij}}) 
\end{equation*}
where $r_{ij}$ runs in all distinct cycles $\sigma_{ij}$ in $\sigma^a$.
Substitute this identity to twisted $L$-function defined in Section \ref%
{S:3.1}, we get after some elementary computation 
\begin{equation*}
L(\overline{P}(x^s);T) =\prod_{r_{ij}} L(\overline{P}/{\mathbb{F}}%
_{q^{\ell^{\prime}_i}},\chi_s^{r_{ij}};T^{\ell^{\prime}_i}) 
\end{equation*}
where the product ranges over all distinct cycles in $\sigma^a$. Group this
product in terms of cycle decomposition of $\sigma$, we finish our proof.
\end{proof}

\begin{proof}[Proof of Theorem \protect\ref{T:1}]
By Proposition \ref{P:twisted} (ii) we have 
\begin{eqnarray*}
\mathrm{NP} _q(L(\overline{P}/{\mathbb{F}}_{q^{\ell_i^{\prime}}},%
\chi_s^{r_{ij}};T^{\ell_i^{\prime}})) &=&\ell_i^{\prime}\; \mathrm{NP}
_{q^{\ell_i^{\prime}}}(L(\overline{P}/{\mathbb{F}}_{q^{\ell_i^{\prime}}},%
\chi_s^{r_{ij}};T) \\
&\succ & \ell_i^{\prime}\; \mathrm{HS} ({\mathbb{A}}_{d_1,d_2},\nu,%
\chi_s^{r_{ij}}) \\
&=&\ell_i^{\prime}\; \mathrm{HS} ({\mathbb{A}}_{d_1,d_2},\nu,\chi_s^{r_i})
\end{eqnarray*}
for all $1\leq j\leq \ell_i/\ell_i^{\prime}$. Thus 
\begin{equation*}
\mathrm{NP} _q(L_i(T)) \succ \frac{\ell_i}{\ell_i^{\prime}}
(\ell_i^{\prime}\; \mathrm{HS} ({\mathbb{A}}_{d_1,d_2},\nu,\chi_s^{r_i}))
=\ell_i\; \mathrm{HS} ({\mathbb{A}}_{d_1,d_2},\nu,\chi_s^{r_i}). 
\end{equation*}
By the split of the $L$-function in Lemma \ref{L:L-functionsplit} and by
Lemma \ref{L:HS-split}, we have 
\begin{eqnarray*}
\mathrm{NP} _q(\overline{P}(x^s);{\mathbb{F}}_q) &=& \boxplus_{i=1}^{u} 
\mathrm{NP} _q(L_i(T))  \notag \\
&\succ & \boxplus_{i=1}^{u} \ell_i\; \mathrm{HS} ({\mathbb{A}}%
_{d_1,d_2},\nu,\chi_s^{r_i})= \mathrm{HS} ({\mathbb{A}}_{d_1,d_2},\nu,s).
\end{eqnarray*}
where the box-sum ranges over all cycle decomposition of $%
\sigma=\prod_{i=1}^{u}\sigma_i$. The proof for the case $\nu=1$ is omitted
here since it is done in \cite{Zhu:3}.
\end{proof}

Let $ \mathrm{GNP} ({\mathbb{A}}_{d_1,d_2},s;\overline{\mathbb{F}}_p)$ be
the generic Newton polygon for exponential sums of $\overline{P}%
(x^s)/\overline{\mathbb{F}}_p$. That is, 
\begin{equation*}
\mathrm{GNP} ({\mathbb{A}}_{d_1,d_2},s;\overline{\mathbb{F}}_p):=\inf_{%
\overline{P}} \mathrm{NP} _q(\overline{P}(x^s);{\mathbb{F}}_q) =\inf_{P} 
\mathrm{NP} (P(x^s)\bmod {\mathcal{P}} ) 
\end{equation*}
where $\overline{P}$ ranges in ${\mathbb{A}}_{d_1,d_2}({\mathbb{F}}_q)$ for
any $q$, and where $P$ ranges in ${\mathbb{A}}_{d_1,d_2}(\overline{\mathbb{Z}%
}_p\cap\overline{\mathbb{Q}})$ and $ {\mathcal{P}} $ is any prime over $p$
in $\overline{\mathbb{Q}}$.

Recall $\sigma$ is a permutation on $\{0,1,\ldots,s-1\}$ induced by
multiplication by $p$ modulo $s$. For every cycle $\sigma_i$ in $\sigma$ we
have a nonconstant polynomial $G_{\nu,r}$ (see Proposition \ref{P:1}) where $%
r$ is an element in $\sigma_i$. (It is independent of the choice of $r$ in $%
\sigma_i$.) Let $G_\nu=\prod G_{\nu,r}$ where $r$ runs in distinct cycles $%
\sigma_1,\ldots, \sigma_u$ of $\sigma$, then $G_\nu$ is nonconstant
polynomial in ${\mathbb{Q}}[\vec{a}]$ as well. Let $ {\mathcal{U}} _\nu$ be
the complement of $G_\nu=0$ in ${\mathbb{A}}_{d_1,d_2}$. Then $ {\mathcal{U}}
_\nu$ is a Zariski dense open subset of ${\mathbb{A}}_{d_1,d_2}$ defined
over ${\mathbb{Q}}$. Our Theorems \ref{T:1} and \ref{T:2} are proved in the
following stronger version. Its proof is similar to that of Theorem \ref{T:1}%
.

\begin{theorem}
\label{T:4} Let notations be as in Theorem \ref{T:2}. Then\newline
(a) For $p\equiv \nu\bmod s$ large enough (depending only on $d_1,d_2,\nu,s$%
), we have

\begin{enumerate}
\item[(i)] $ \mathrm{GNP} ({\mathbb{A}}_{d_1,d_2},s;\overline{\mathbb{F}}_p)$
exists and $ \mathrm{NP} (P(x^s)\bmod  {\mathcal{P}} ) \succ  \mathrm{GNP} ({%
\mathbb{A}}_{d_1,d_2},s;\overline{\mathbb{F}}_p)$ for all $P\in{\mathbb{A}}%
_{d_1,d_2}(\overline{\mathbb{Q}})$;

\item[(ii)] these two polygons coincide if and only if $P\in  {\mathcal{U}}
_\nu(\overline{\mathbb{Q}})$.
\end{enumerate}

(b) For $P\in  {\mathcal{U}} _\nu(\overline{\mathbb{Q}})$ we have 
\begin{equation*}
\lim_\nu  \mathrm{NP} _q(P(x^s)\bmod  {\mathcal{P}} ;{\mathbb{F}}_q) =  
\mathrm{HS} ({\mathbb{A}}_{d_1,d_2},\nu,s). 
\end{equation*}
\end{theorem}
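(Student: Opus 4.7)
The plan is to reduce to the twisted case handled by Proposition \ref{P:1}, paralleling the proof of Theorem \ref{T:1}. Since $\mathrm{NP}_q(\overline{P}(x^s);\mathbb{F}_q)$ is independent of the base field $\mathbb{F}_q$ within $\overline{\mathbb{F}}_p$, I may enlarge $q=p^a$ so that $s\mid (q-1)$, ensuring every multiplicative character of order dividing $s$ is defined on $\mathbb{F}_q^\times$. Lemma \ref{L:L-functionsplit} then factors
$$L(\overline{P}(x^s);T)=\prod_{i=1}^{u}L_i(T),\qquad L_i(T)=\prod_{j=1}^{\ell_i/\ell_i'}L(\overline{P}/\mathbb{F}_{q^{\ell_i'}},\chi_s^{r_{ij}};T^{\ell_i'}),$$
and, using $\mathrm{NP}_q(f(T^\ell))=\ell\cdot\mathrm{NP}_{q^\ell}(f(T))$ in the paper's scaling convention, the Newton polygon splits as a concatenation of $\ell_i'$-scalings of the twisted Newton polygons $\mathrm{NP}_{q^{\ell_i'}}(L(\overline{P}/\mathbb{F}_{q^{\ell_i'}},\chi_s^{r_{ij}};T))$.

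For part (a), I apply Proposition \ref{P:1}(a) over $\mathbb{F}_{q^{\ell_i'}}$ to each twisted factor: for $p\equiv \nu\bmod s$ sufficiently large, uniformly across the finitely many pairs $(i,j)$, the generic twisted Newton polygon $\mathrm{GNP}(\mathbb{A}_{d_1,d_2},\chi_s^{r_{ij}};\overline{\mathbb{F}}_p)$ exists, bounds the corresponding twisted Newton polygon from below, and equality holds iff $P\in \mathcal{U}_{\nu,r_{ij}}(\overline{\mathbb{Q}})$. Box-summing these bounds produces $\mathrm{GNP}(\mathbb{A}_{d_1,d_2},s;\overline{\mathbb{F}}_p)$ and the inequality in (i). For (ii), I invoke the observation from the paragraph preceding the theorem that the defining polynomial $G_{\nu,r}$ of $\mathcal{U}_{\nu,r}$ depends only on the cycle $\sigma_i$ of $\sigma$ containing $r$; consequently $\mathcal{U}_{\nu,r_{ij}}=\mathcal{U}_{\nu,r_i}$ for every $j$, and simultaneous equality in every twisted piece reduces to $P\in\bigcap_{i=1}^{u}\mathcal{U}_{\nu,r_i}=\mathcal{U}_\nu$.

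For part (b), I apply Proposition \ref{P:1}(b) to each twisted piece: for $P\in\mathcal{U}_\nu(\overline{\mathbb{Q}})$,
$$\lim_\nu \mathrm{NP}_{q^{\ell_i'}}(L(P/\mathbb{F}_{q^{\ell_i'}},\chi_s^{r_{ij}};T)) = \mathrm{HS}(\mathbb{A}_{d_1,d_2},\nu,\chi_s^{r_i}),$$
independent of $j$ since $\lambda_i$ is a cycle invariant. Scaling by $\ell_i'$, box-summing over $j$ (contributing $\ell_i$ total copies per cycle) and then over $i$, Lemma \ref{L:HS-split}(i) identifies the limit as $\mathrm{HS}(\mathbb{A}_{d_1,d_2},\nu,s)$. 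The principal obstacle is uniformity in $p$ as $p\to\infty$ within the residue class $\nu\bmod s$: the integers $a$ and $\ell_i'$ a priori depend on $p$, not merely on $\nu$ and $s$. This is resolved by exploiting the independence of the Newton polygon from the base field (so we may normalize $q$ for each $p$), together with the key fact that $\ell_i$ and $\lambda_i$ depend only on $\nu\bmod s$, so the Hodge-Stickelberger target is the same across the whole residue class.
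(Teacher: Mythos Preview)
Your proposal is correct and follows exactly the approach sketched in the paper: reduce to the twisted case via the factorization of Lemma~\ref{L:L-functionsplit}, apply Proposition~\ref{P:1} to each twisted factor, and box-sum using Lemma~\ref{L:HS-split}(i), mirroring the proof of Theorem~\ref{T:1}. The paper's own proof simply states that the result follows from Proposition~\ref{P:1} and Lemma~\ref{L:L-functionsplit} ``in the same fashion'' as Theorem~\ref{T:1} and omits the details you have supplied.
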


\begin{proof}
Our theorem follows immediately by applying Proposition \ref{P:1} and the
key Lemma \ref{L:L-functionsplit} in the same fashion as that in the proof
of Theorem \ref{T:1}. We hence omit details here.
\end{proof}

Finally we remark in the polynomial case, i.e., $d_2=0$, similar argument
can be carried out which yield similar results. In fact, one can carry out
calculations in the spirit of \cite{B-F} to get explicitly the generic Newton 
polygons, and \emph{Hasse polynomials} that describe exactly which polynomials
attain this polygon.

\section{Further questions and multivariable cases}

\label{S:6}


\subsection{Global permutation polynomials}

Wan's \cite[Conjecture 1.12]{Wan:2} was proved in 1-variable case by \cite%
{Zhu:1,Zhu:2} and was generalized to Laurent polynomials in \cite{Li-Zhu:1}
that there is a Zariski dense open subset $ {\mathcal{U}} $ in ${\mathbb{A}}%
_{d_1,d_2}$ defined over ${\mathbb{Q}}$ such that for every $f\in  {\mathcal{%
U}} (\overline{{\mathbb{Q}}})$ we have its limit of Newton polygon
approaching the Hodge polygon as $p\rightarrow \infty$. It has been
fascinating researchers to know what (Laurent) polynomials $f$ over $%
\overline{\mathbb{Q}}$ that would fail the asymptotic property $%
\lim_{p\rightarrow\infty} \mathrm{NP} (f\bmod  {\mathcal{P}} ) = \mathrm{HP}
({\mathbb{A}}_{d_1,d_2})$. We will discuss below some known such (Laurent)
polynomials. For simplicity we restrict ourselves over ${\mathbb{Q}}$
instead of extension of ${\mathbb{Q}}$ in $\overline{\mathbb{Q}}$, one can
extend our argument to extensions of ${\mathbb{Q}}$ by the references we
shall provide in the context.

For any positive integer $n$, let $D_n(x,y)$ be the unique polynomial in ${%
\mathbb{Z}}[x,y]$ such that $D_n(u+v,uv)=u^n+v^n$. For any $c\in {\mathbb{Q}}
$ the monic degree-$n$ polynomial $D_n(x,c)$ in ${\mathbb{Q}}[x]$ is called
a degree-$n$ \emph{Dickson polynomial} over ${\mathbb{Q}}$. If $p$ divides $c
$, then $D_n(x,c)=x^n$ is a monomial which is a permutation on ${\mathbb{F}}%
_p$ if and only if $\gcd(n,p-1)=1$; If $p$ does not divide $c$, it is a
permutation on ${\mathbb{F}}_p$ if and only if $\gcd(n,p^2-1)=1$ (due to 
\cite{Dickson}, see \cite[Chapter 7]{Lidl} for quick reference).

For any $l\geq 1$, let \emph{global permutation polynomial over ${\mathbb{Q}}
$ of level $l$} be a polynomial $h(x)$ in ${\mathbb{Q}}[x]$ such that $%
x\mapsto h(x)$ is a permutation on ${\mathbb{F}}_p, \ldots, {\mathbb{F}}%
_{p^l}$ for infinitely many primes $p$. It is easy to see that $D_n(x,c)$ in 
${\mathbb{Q}}[x]$ is a global permutation polynomial of level $l$ if and
only if every prime factor $Q$ of $n$ satisfies $Q>l+1$ (when $c=0$) and $Q
> 2l+1$ (when $c\neq 0$). Thus for level $l=1$ it is equivalent to $2\nmid n$
(when $c=0$) and $\gcd(n,6)=1$ (when $c\neq 0$).

It is proved by Schur that every global permutation polynomial over ${%
\mathbb{Q}}$ is a composition of Dickson polynomials $D_n(x,c)$ over ${%
\mathbb{Q}}$ and linear polynomials over certain extensions of ${\mathbb{Q}}$%
. (This is generalized to all number fields by Fried in \cite{Fried}.)

Our result in Theorem \ref{T:2} implies that for any polynomial or Laurent
polynomial $f(x)$ over $\overline{\mathbb{Q}}$ containing $x^s=D_s(x,0)$ as
a right composition factor for any $s>2$, that is, $f(x) = P(x^s)$, the
limit of $p$-adic Newton polygon does not exist as $p\rightarrow \infty$.
Following Wan's argument on polynomials which is communicated to the
authors, we demonstrate here that if $f(x)$ is any Laurent polynomial in ${%
\mathbb{A}}_{d_1,d_2}(\overline{\mathbb{Q}})$ containing a global
permutation polynomial of degree $s>1$ of level $3$ as a right composition
factor, that is $f(x)=P(D_s(x,c))$, then the limit $\lim_{p\rightarrow
\infty}  \mathrm{NP} (f(x)\bmod p)$ does not exist. Without loss of
generality, we assume $d_1\geq d_2$ for the rest of this paragraph. Since $s$
must be odd, our Dickson polynomials fixes $0$ and $\infty$, and finally we
assume the global permutation polynomial composition factor is $D_s(x,c)$
for some $s>1$ where $s$'s prime factors are all $\geq 7$ (when $c=0$) and $%
\geq 11$ (when $c\neq 0$). Write $L(f(x);{\mathbb{F}}_p) =
1+C_1T+C_2T^2+\cdots$ and $L(P(x);{\mathbb{F}}_p)=1+c_1T+\cdots$. For any
prime $p$ such that $D_s(x,c)$ permutes ${\mathbb{F}}_p,{\mathbb{F}}_{p^2},{%
\mathbb{F}}_{p^3}$, we have $S_k(f;{\mathbb{F}}_p) = S_k(P;{\mathbb{F}}_p)$
for $1\leq k\leq 3$. By the lower bound for Newton polygon of $L(P(x);{%
\mathbb{F}}_p)$ (see \cite{Zhu:2}) we have $ \mathrm{ord} _p C_2= \mathrm{ord%
} _p c_2\geq 1/(d_1/s) = s/d_1 \geq 7/d_1$. This implies the Newton polygon
of $L(f(x);{\mathbb{F}}_p)$ does not have a breakpoint at $(2, 1/d_1)$;
similarly, since $ \mathrm{ord} _p c_3\geq 2s/d_1\geq 14/d_1$ a breakpoint
at $(3,3/d_1)$ is impossible. On the other hand, we know that for infinitely
many prime $p$ (precisely those $p\equiv 1\bmod  \mathrm{lcm } (sd_1,sd_2)$) 
$ \mathrm{NP} (f\bmod p)$ coincides with its lower bound and
has break point at $(2,1/d_1)$ if $d_1>d_2$ and at $(3,3/d_1)$ if $d_1=
d_2$. Thus $\lim_{p\rightarrow\infty} \mathrm{NP} (f\bmod p)$ does not exist.

We say two (Laurent) polynomials $f(x)$ and $h(x)$ over ${\mathbb{Q}}$ of
degree $d$ are \textit{Artin-Schreier isomorphic} if the two Artin-Schreier
curves given by $f(x)=h(wx+v)$ for some $d$-th root of unity $w$ and $%
v\in\overline{\mathbb{Q}}$. For reader's convenience, we quote a corrected
version of Wan's conjecture below from \cite[Chapter 5]{Yang}.

\begin{conjecture}[Wan]
If $f(x)$ is a polynomial in ${\mathbb{Q}}[x]$ which does not contain a
global permutation polynomial of degree $>1$ as right composition factor
over ${\mathbb{Q}}$ (upto Artin-Schreier isomorphism), then $%
\lim_{p\rightarrow \infty}  \mathrm{NP} _p(f\bmod p)$ exists and is equal to
its lower bound Hodge polygon.
\end{conjecture}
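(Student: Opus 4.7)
The plan is to attack the conjecture by its contrapositive: assuming $\lim_{p \to \infty} \mathrm{NP}_p(f \bmod p)$ fails to equal $\mathrm{HP}({\mathbb{A}}_{d_1,d_2})$, we would produce a right composition factor of $f$ which is a global permutation polynomial of degree $>1$ over ${\mathbb{Q}}$, up to Artin--Schreier isomorphism. The forward direction (existence of such a factor forces non-convergence) is already established: this section's own computation, following Wan, uses precisely the identity $S_k(f;{\mathbb{F}}_{p^k}) = s\cdot S_k(P;{\mathbb{F}}_{p^k})$ when $f = P\circ D_s$ and $D_s$ permutes ${\mathbb{F}}_{p^k}$. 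So the real content is to show that Dickson-type right factors (and their Artin--Schreier twists) are the \emph{only} source of obstruction.

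The first step would be to isolate and localize the arithmetic obstruction. Writing $L(f/{\mathbb{F}}_p;T)=1+C_1T+\cdots+C_dT^d$, failure to reach the Hodge polygon at some vertex $(k,*)$ means that $\mathrm{ord}_p C_k$ strictly exceeds the Hodge lower bound for a positive-density set of primes $p$. By the Newton identities, this is equivalent to congruences of the form
\begin{equation*}
S_j(f;{\mathbb{F}}_{p^j}) \equiv T_{j}(p) \pmod{p^{\epsilon_j}}, \qquad 1 \leq j \leq k,
\end{equation*}
where each $T_j(p)$ is a "lower complexity" expression in terms of exponential sums of some polynomial of smaller degree. The model example is $T_j(p) = s\cdot S_j(P;{\mathbb{F}}_{p^j})$ when $f=P\circ D_s$, and the valuation gain comes directly from the Stickelberger-type estimate for the $s$-fold covering.

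The second and crucial step is an arithmetic rigidity argument converting such persistent congruences into a global algebraic identity. Concretely, one seeks a theorem of the following shape: if there exist polynomials $f,g \in {\mathbb{Q}}[x]$ and an integer $\ell\geq 1$ such that for infinitely many primes $p$ the character sum identity $S_j(f;{\mathbb{F}}_{p^j}) = c\cdot S_j(g;{\mathbb{F}}_{p^j})$ holds for $1\leq j \leq \ell$, then $f = g\circ h$ over $\overline{{\mathbb{Q}}}$ for some $h$ which is a level-$\ell$ global permutation polynomial. Combined with the theorem of Schur (extended by Fried \cite{Fried}) classifying global permutation polynomials over number fields as compositions of Dickson polynomials and linears, this $h$ is exactly of the forbidden form; the linear factors are exactly the Artin--Schreier isomorphism freedom.

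The main obstacle is unmistakably the rigidity step. The available tools---Grothendieck specialization, Dwork's trace formula, Katz's equidistribution---give the generic convergence statement of Wan (proved in \cite{Zhu:1,Zhu:2,Li-Zhu:1}) but do not rule out sporadic arithmetic coincidences that mimic a composition factor without actually being one. A genuine proof would require a Chebotarev- or monodromy-theoretic mechanism transforming the coarse invariant "$p$-adic valuations of $L$-function coefficients" into the fine algebraic data of a composition factor defined over ${\mathbb{Q}}$. Developing such a dictionary---in effect, an inverse theorem for the Hodge lower bound---is exactly why the statement remains conjectural, and any serious attack would likely begin by isolating low-degree cases ($d=4,5,6$) where Fried's classification is explicit enough to make a direct obstruction analysis tractable.
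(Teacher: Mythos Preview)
There is no proof in the paper to compare against: the statement is explicitly labeled a \emph{conjecture} (Wan's conjecture, quoted from \cite{Yang}) and the paper makes no attempt to prove it. Section~\ref{S:6} only establishes the \emph{converse} direction in a special case---that a right composition factor which is a level-$3$ global permutation polynomial forces the limit not to exist---and then records the conjecture as an open problem. You yourself recognize this at the end of your proposal (``is exactly why the statement remains conjectural''), so what you have written is a proof \emph{strategy} for an open problem, not a proof to be checked against the paper.

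As a strategy your outline is reasonable, and you correctly locate the crux in the rigidity step: converting persistent $p$-adic congruences on character sums, valid for a positive-density set of primes, into an honest algebraic composition identity over ${\mathbb{Q}}$. That step is indeed the entire content of the conjecture and no mechanism for it is currently known. One small correction: when $D_s$ permutes ${\mathbb{F}}_{p^k}$ and $f = P\circ D_s$, the identity is $S_k(f) = S_k(P)$, not $s\cdot S_k(P)$; the factor of $s$ appears in the $x\mapsto x^s$ case only because that map is $s$-to-$1$ onto its image when $s\mid p^k-1$, whereas a permutation is a bijection. This does not affect the shape of your argument.
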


\subsection{A variant of Schur's theorem}

Let $\psi: {\mathbb{F}}_p\rightarrow {\mathbb{Q}}(\zeta_p)^\times$ be the
nontrivial additive character defined by $\psi(a)= \zeta_p^a$. 

\begin{conjecture}
\label{Conj:2} Let $f(x)\in{\mathbb{Q}}[x]$ be of degree $d\geq 2$ and let 
$S(f(x)\bmod p)=\sum_{x\in {\mathbb{F}}_p}\psi(f(x))$ be the first
exponential sum mod $p$. Let $\varepsilon>0$. If $ \mathrm{ord} _p S(f(x)
\bmod p) > 1/d +\varepsilon$ for infinitely many primes $p$, then $f(x)=
P(D_s(x,c))$ (up to Artin-Schreier isomorphism) for some 
$P\in {\mathbb{Q}}[x]$ and a global permutation Dickson polynomial $D_s$ of degree $s>1$.
\end{conjecture}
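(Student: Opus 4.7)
The plan is to treat the hypothesis as a systematic $p$-adic anomaly in the first Newton slope and to track which composition structures of $f$ can sustain such an anomaly on an infinite set of primes. Writing $L(f;\F_p)=1+b_1T+\cdots$, one has up to a boundary term from $x=0$ that $b_1=\pm S(f\bmod p)$, so the hypothesis reads $\ord_p b_1>\frac{1}{d}+\varepsilon$ for infinitely many $p$. Since the Hodge polygon of $\A_{d,0}$ has first slope $\frac{1}{d}$, which is a sharp lower bound realized generically by Adolphson--Sperber, the hypothesis says precisely that $\NP_p(f\bmod p)$ fails its Hodge lower bound at its very first vertex by a uniform amount $\varepsilon$ on an infinite set of primes.

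First I would enumerate the possible right composition factors of $f$ over $\overline{\Q}$. By Ritt's first and second theorems one writes $f=f_1\circ\cdots\circ f_m$ with each $f_j\in\overline{\Q}[x]$ of degree at least $2$ and indecomposable. Among indecomposable polynomials over a number field, Schur's theorem, extended to arbitrary number fields by Fried, classifies those that permute $\F_p$ for infinitely many $p$: up to linear changes of variable they are exactly the Dickson polynomials $D_s(x,c)$ with prime degree $s$ satisfying the usual congruence conditions on $p$ relative to $s$ and $c$. The goal then becomes to show that the slope anomaly in the hypothesis forces one of these $f_j$ to be Dickson, whence after regrouping $f=P(D_s(x,c))$ up to Artin--Schreier isomorphism.

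Next, assume toward contradiction that no right composition factor of $f$ is Dickson up to AS isomorphism. I would show that on a density-one set of primes $p$ the first Newton slope equals exactly $\frac{1}{d}$, contradicting the hypothesis. The mechanism is a Chebotarev-type density argument on the Galois closure of the extension $\overline{\Q}(x)/\overline{\Q}(f(x))$: the arithmetic monodromy of $f$ has no Dickson-type quotient, and so the density of primes $p$ for which any non-linear right factor of $f$ happens to permute $\F_p$ must be zero. On the complementary density-one set of primes one invokes the sharpness of the Hodge bound, either via the $s=1$ case of Theorem~\ref{T:2} or directly via the Adolphson--Sperber specialization, to conclude $\ord_p b_1=\frac{1}{d}$, contradicting $\ord_p b_1>\frac{1}{d}+\varepsilon$ infinitely often.

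The main obstacle is the Chebotarev--monodromy step above, which is essentially a quantitative form of Wan's Conjecture~6.1 on convergence of $\NP_p$ to $\HP$ for polynomials without global permutation right factors; this direction of the conjecture is open in general and appears to require Fried's classification of exceptional covers of $\P^1$ in genus $0$. The role of the fixed $\varepsilon>0$ is essential: by Theorem~\ref{T:1} applied to any Dickson right factor of degree $s>1$, the only admissible enlarged first slopes form the discrete set $\bigl\{\frac{s(1-\lambda_i)}{d}\bigr\}$ attached to the corresponding Hodge--Stickelberger polygon, values bounded away from $\frac{1}{d}$; the $\varepsilon$-gap separates these special slopes from the generic slope $\frac{1}{d}$ and is what permits one to read off a composition structure from an inequality of valuations.
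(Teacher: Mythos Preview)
The statement you are attempting to prove is labeled a \emph{conjecture} in the paper; the paper offers no proof of it and in fact uses it only as a hypothesis (``suppose Conjecture~\ref{Conj:2} holds'') in the proposition that immediately follows. There is therefore no paper proof to compare against.

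Your proposal is an attack on an open problem, and you correctly diagnose the essential obstruction yourself: the step asserting that absence of a Dickson right composition factor forces the first Newton slope to equal $\frac{1}{d}$ outside a density-zero set of primes is, as you say, essentially Wan's Conjecture (the paper's Conjecture in \S6.1), which is likewise open. So what you have written is at best a reduction of one open conjecture to another, not a proof.

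There is moreover a gap in the reduction itself. You write that on the complementary density-one set of primes ``one invokes the sharpness of the Hodge bound, either via the $s=1$ case of Theorem~\ref{T:2} or directly via the Adolphson--Sperber specialization''. But Theorem~\ref{T:2} (and likewise the Adolphson--Sperber genericity result) guarantees convergence $\NP\to\HP$ only for polynomials lying in a certain Zariski-open subset $\cU$ of the coefficient space $\A_{d,0}$; it says nothing about a \emph{fixed} $f$, which may well lie on the excluded hypersurface. Knowing that no non-linear right factor of $f$ permutes $\F_p$ for a given $p$ does not by itself pin down $\ord_p S(f\bmod p)$; the implication ``$f$ has no global-permutation right factor $\Rightarrow$ $\NP_p(f\bmod p)\to\HP$'' is precisely the content of Wan's Conjecture, so this step does not circumvent the obstruction but invokes it a second time in different language.
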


The conjecture above can be considered as a generalization of the Schur's
conjecture on global permutation polynomials since it can be phrased in the
following term: ``For any $f\in {\mathbb{Q}}[x]$ if $S(f(x)\bmod p) = 0$
(i.e.,$ \mathrm{ord} _p(S(f(x)\bmod p))=+\infty$) for infinitely many
prime $p$ then $f(x)$ is a Dickson polynomial up to Artin-Schreier
isomorphism'' (see \cite[Chapter 7]{Lidl}).

\begin{proposition}
Let notation be as above and suppose Conjecture \ref{Conj:2} holds. Then the
limit $\lim_{p\rightarrow\infty} \mathrm{NP} _1(f(x)\bmod p)$ of first slope
exists if and only if $f(x)\neq P(D_s(x,c))$ (up to any Artin-Schreier
isomorphism) for some $P\in{\mathbb{Q}}[x]$ and a global permutation Dickson
polynomial $D_s(x)$ of degree $s>1$.
\end{proposition}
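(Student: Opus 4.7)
The plan is to prove each direction separately, using Conjecture~\ref{Conj:2} for $(\Leftarrow)$ and explicit subsequence analysis for $(\Rightarrow)$. In both directions the basic tool is the Hodge lower bound on the $p$-adic Newton polygon of the $L$-function of a polynomial of degree $d$, which asserts $\mathrm{ord}_p C_k \geq k(k+1)/(2d)$ where $C_k$ is the coefficient of $T^k$ in $L(f/\mathbb{F}_p;T)$. In particular the first slope $\min_k \mathrm{ord}_p C_k/k$ satisfies $\mathrm{NP}_1 \leq \mathrm{ord}_p C_1$ and $\mathrm{NP}_1 \geq 1/d$, and for $k\geq 2$ one has $\mathrm{ord}_p C_k/k \geq 3/(2d)$.

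For the $(\Leftarrow)$ direction, suppose $f$ is not of the form $P(D_s(x,c))$ up to Artin-Schreier isomorphism for any $s>1$. The contrapositive of Conjecture~\ref{Conj:2} says that for each $\varepsilon>0$ only finitely many primes $p$ satisfy $\mathrm{ord}_p S(f(x)\bmod p) > 1/d+\varepsilon$. Combined with the Hodge lower bound $\mathrm{ord}_p C_1 \geq 1/d$, this forces $\mathrm{ord}_p C_1 \to 1/d$ as $p\to\infty$; the sandwich inequality then yields $\mathrm{NP}_1(f\bmod p) \to 1/d$, so in particular the limit exists.

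For the $(\Rightarrow)$ direction, assume $f(x)=P(D_s(x,c))$ with $s>1$ and $D_s$ a global permutation Dickson polynomial. I exhibit two infinite subsequences of primes on which the first slope has different limits. On Subsequence~A, consisting of primes where $D_s(\cdot,c)$ is a bijection on $\mathbb{F}_p$ (infinite by the global-permutation hypothesis), the substitution $y=D_s(x,c)$ gives $S(f\bmod p)=S(P\bmod p)$, so $\mathrm{ord}_p C_1(f) \geq 1/\deg P = s/d$; combined with the $k\geq 2$ bound, the first slope on this subsequence is at least $\min(s/d,3/(2d))=3/(2d)>1/d$. On Subsequence~B, consisting of primes $p\equiv 1\bmod \mathrm{lcm}(sd)$, the argument in the paragraph preceding Conjecture~\ref{Conj:2} (via Theorem~\ref{T:1} and Remark~\ref{R:1} when $c=0$, and via an Artin-Schreier isomorphism reduction due to Wan when $c\neq 0$) shows that the Newton polygon of $L(f/\mathbb{F}_p;T)$ coincides with the Hodge polygon of $\mathbb{A}_{d,0}$, whose first slope is $1/d$. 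Since the two subsequence limits differ, the overall limit cannot exist.

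The main obstacle is the $c\neq 0$ case in Subsequence~B: Theorem~\ref{T:1} directly handles only $f=P(x^s)$, so for general $D_s(x,c)$ one must verify independently that the full Newton polygon meets its Hodge lower bound on the prescribed congruence class. A natural route uses the Dickson identity $D_s(u+c/u,c)=u^s+(c/u)^s$, which after a quadratic base change realizes $f$ as a Laurent polynomial in $u$ and lets one invoke the Laurent form of Theorem~\ref{T:1}. The remainder of the argument is then a routine combination of the Hodge bound, the defining property of global permutation polynomials, and the sandwich inequality $1/d \leq \mathrm{NP}_1 \leq \mathrm{ord}_p C_1$.
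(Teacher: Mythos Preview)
Your argument is essentially correct and follows the same two-subsequence structure as the paper: the $(\Leftarrow)$ direction via the sandwich $1/d \leq \mathrm{NP}_1 \leq \mathrm{ord}_p C_1$ together with the contrapositive of Conjecture~\ref{Conj:2}, and the $(\Rightarrow)$ direction by exhibiting primes on which $\mathrm{NP}_1 \geq 3/(2d)$ (permutation primes) versus primes on which $\mathrm{NP}_1 = 1/d$.

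The one place you diverge from the paper is Subsequence~B, and the divergence is what creates the ``main obstacle'' you flag. You take $p \equiv 1 \bmod sd$ and try to push the decomposition $f = P \circ D_s$ through Theorem~\ref{T:1}, which indeed forces a separate treatment of $c \neq 0$ via the Dickson identity and a Laurent realization. The paper avoids all of this: it simply takes $p \equiv 1 \bmod d$ and applies the $s=1$, $d_2=0$ case of Theorem~\ref{T:1} (equivalently the classical result in \cite{Zhu:3}) directly to $f$ itself, yielding $\mathrm{NP}(f \bmod p) = \mathrm{HP}(\mathbb{A}_d)$ and hence $\mathrm{NP}_1 = 1/d$. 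Since this makes no reference to the Dickson factor, it works uniformly for $c=0$ and $c \neq 0$, and your obstacle evaporates. Your approach is not wrong, but it introduces a complication that the simpler choice of congruence class $p \equiv 1 \bmod d$ renders unnecessary.
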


\begin{proof}
It was already proved above that if $f(x)$ contains a right Dickson
composition factor (of degree prime to $2$ or $6$ depending on whether $c=0$ or
not) then the limit does not exist. Conversely, suppose the first slope
limit does not exist. Since for $p\equiv 1\bmod d$ we always have $ \mathrm{%
NP} _1(f\bmod p) = 1/d$ (and $ \mathrm{NP} _2(f\bmod p)=2/d$), this is
equivalent to the hypothesis of Conjecture \ref{Conj:2} since $ \mathrm{NP}
_1(f\bmod p) =  \mathrm{ord} _p S_1(f\bmod p)$.
\end{proof}

\subsection{Multivariable cases}

Our main result in Theorem \ref{T:1} generalizes to multivariable cases. Let 
${\mathbb{A}}$ be the space of polynomials in $n$ variables $x_1,\ldots,x_n$
parametrized by their coefficients of monomials. Let $\overline{P}$ be a
polynomial in ${\mathbb{A}}({\mathbb{F}}_q)$. Fix $\vec{s}=(s_1,\ldots,s_n)$
for integers $s_\iota\geq 1$. All primes $p$ in this subsection will be
coprime to $s_1\cdots s_n$. Let $\vec\nu = p\bmod \vec{s}$, the least
nonnegative residue. For each $1\leq \iota\leq n$, let $\sigma_\iota$ be the
permutation on the set $\{0,1,\ldots,s_%
\iota-1\}$ induced by multiplication of $p$. We write its cycle
decomposition as 
\begin{equation*}
\sigma_\iota = \prod_{i_\iota=1}^{u_\iota}\sigma_{\iota,i}
\end{equation*}
for $\ell_{\iota,i}$-cycles $\sigma_{\iota,i}$ (including $1$-cycles!). For
each $1\leq \iota\leq n$ and $1\leq i_\iota\leq u_\iota$, let 
\begin{equation*}
\lambda_{\iota,i_\iota} := \frac{\sum_{j\in \sigma_{\iota,i_\iota}}j} {%
s_\iota \ell_{\iota,i_{\iota}}}. 
\end{equation*}
So $0\leq \lambda_{\iota,i_{\iota}} <1$. Write $\vec\lambda_{\vec{i}%
}:=(\lambda_{1,i_1},\ldots,\lambda_{n,i_n})$. Let $w:{\mathbb{Z}}%
^n\rightarrow {\mathbb{Z}}$ be the weight function with respect to a given $%
\overline{P}$ as in \cite{AS:1} and \cite{Wan:2}. It is easy to see that it
extends to ${\mathbb{Q}}^n$.

We define the \emph{Hodge-Stickelberger polygon} $ \mathrm{HS} ({\mathbb{A}},%
\vec{\nu},\vec{s})$ in multivariable setting as concatenation of line
segments given by 
\begin{equation*}
(w(\vec{m}-w(\vec\lambda_{\vec{i}})), \ell_{1,i_1}\cdots\ell_{n,i_n}) 
\end{equation*}
where $\vec{m}$ ranges over the $n!V(\overline{P})$ elements in $C(\overline{%
P})\cap {\mathbb{Z}}^n$ as defined in \cite{AS:1}, $1\leq \iota\leq n$ and $%
1\leq i_\iota\leq u_\iota$. One observes that this polygon has horizontal
length $s_1\cdots s_n n! V(\overline{P})$.

\begin{theorem}
\label{T:5} Suppose $\overline{P}(x_1,\ldots,x_n)$ over ${\mathbb{F}}_q$ is
nondegenerate and the dimension of the polyhedrum $\Delta(\overline{P})$ is
equal to $n$, then $L(\overline{P}(x_1^{s_1},\ldots,x_n^{s_n})/{\mathbb{F}}%
_q,T)^{(-1)^{n-1}}$ is a polynomial. Moreover its Newton polygon lies over $ 
\mathrm{HS} ({\mathbb{A}},\vec{\nu},\vec{s})$ and their endpoints meet.
\end{theorem}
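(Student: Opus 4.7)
The plan is to follow the same three-step architecture that proved Theorem \ref{T:1}: (i) decompose the exponential sum of $\overline{P}(x_1^{s_1},\ldots,x_n^{s_n})$ into twisted exponential sums of $\overline{P}$ indexed by multiplicative characters on each coordinate; (ii) bound each twisted $L$-function below by a twisted multivariable Hodge-Stickelberger polygon; (iii) concatenate and regroup according to the cycle structure of $\sigma_1\times\cdots\times\sigma_n$.

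First I would set up the multivariable analog of Section \ref{S:3}. Assume $\vec{s}\mid (q^k-1)$ in the sense that each $s_\iota\mid (q^k-1)$, and let $\chi_{s_\iota}$ be a multiplicative character of order $s_\iota$ on $\F_{q^k}^\times$. For a multi-index $\vec{r}=(r_1,\ldots,r_n)$ with $0\leq r_\iota\leq s_\iota-1$ define the twisted sum
\begin{equation*}
S_k(\overline{P},\chi^{\vec{r}}) := \sum_{\vec{x}\in(\F_{q^k}^\times)^n}\psi_{q^k}(\overline{P}(\vec{x}))\prod_{\iota=1}^n \chi_{s_\iota}^{r_\iota}(x_\iota)
\end{equation*}
and its associated $L$-function $L(\overline{P}/\F_q,\chi^{\vec{r}};T)$. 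Exactly as in Lemma \ref{L:above}, orthogonality of multiplicative characters gives
\begin{equation*}
S_k(\overline{P}(x_1^{s_1},\ldots,x_n^{s_n})) = \sum_{\vec{r}}S_k(\overline{P},\chi^{\vec{r}})
\end{equation*}
and a Galois-orbit bookkeeping under $\sigma^a$ (as in Lemma \ref{L:L-functionsplit}) then factors the untwisted $L$-function over $\F_q$ as a product indexed by the cycles of $\sigma_1^a\times\cdots\times\sigma_n^a$ of twisted $L$-functions over extension fields, with appropriate Frobenius substitutions $T\mapsto T^{\ell'}$. This step is formal and parallels Lemma \ref{L:L-functionsplit}, using $\overline{P}(\vec{x}^q)=\overline{P}(\vec{x})^q$.

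Second, I would invoke the twisted Adolphson-Sperber theory to handle each factor. Under the assumptions that $\overline{P}$ is nondegenerate and $\dim\Delta(\overline{P})=n$, the results of \cite{AS:1,AS:2} (cited in Remark \ref{R:4}) show that $L(\overline{P}/\F_q,\chi^{\vec{r}};T)^{(-1)^{n-1}}$ is a polynomial of degree $n!V(\overline{P})$, and that its Newton polygon lies over the Hodge polygon whose slopes are computed from the weight function $w$ twisted by the vector $\vec{\lambda}_{\vec{i}(\vec{r})}$ derived from the $p$-adic expansion of $(q-1)\vec{r}/\vec{s}$. Precisely the computation in Remark \ref{R:4}, carried out coordinatewise, identifies this twisted Hodge polygon with the piece of $\HS(\A,\vec\nu,\vec{s})$ corresponding to the cycle $(\sigma_{1,i_1},\ldots,\sigma_{n,i_n})$ containing $\vec{r}$. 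The $(-1)^{n-1}$ exponent and the polynomiality of the product then propagate through the factorization of the untwisted $L$-function, giving the first assertion of the theorem.

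Third, concatenating the twisted Newton polygon lower bounds in nondecreasing slope order and using that each cycle $\sigma_{\iota,i_\iota}$ contributes a factor whose Frobenius-substituted polygon has lengths multiplied by $\ell_{1,i_1}\cdots\ell_{n,i_n}$ (analog of Lemma \ref{L:HS-split}(i)) recovers exactly $\HS(\A,\vec\nu,\vec{s})$ by the definition given in the introduction. I expect the main technical obstacle to be the correct bookkeeping of the multivariable cycle decomposition: one must verify that grouping $\sigma^a$-cycles inside each $\sigma$-cycle product yields the right $\ell_{\iota,i_\iota}$ and the right $\vec\lambda_{\vec i}$, since the one-variable argument in Lemma \ref{L:L-functionsplit} uses a single permutation while here one works on the product $\prod_\iota \{0,\ldots,s_\iota-1\}$. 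The matching of endpoints follows at once from the fact that both polygons have horizontal length $s_1\cdots s_n\,n!V(\overline{P})$ and total height $\tfrac{1}{2}s_1\cdots s_n\,n!V(\overline{P})$ by the symmetry $\lambda_{\iota,i}+\lambda_{\iota,j}=1$ coming from pairing $r\leftrightarrow s_\iota-r$ coordinatewise.
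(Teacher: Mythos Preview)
The paper does not actually write out a proof of Theorem \ref{T:5}; immediately after the statement it says only that ``the proof of this theorem is parallel to the proof of Theorem \ref{T:1} and will introduce lots more notations and we hence omit it here.'' Your proposal carries out precisely this parallel strategy---the multivariable character decomposition in the spirit of Lemmas \ref{L:above} and \ref{L:L-functionsplit}, the Adolphson--Sperber twisted Hodge bound \cite{AS:1,AS:2} for each factor, and concatenation via the analog of Lemma \ref{L:HS-split}---so your approach coincides with the one the paper indicates.
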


The proof of this theorem is parallel to the proof of Theorem \ref{T:1} and
will introduce lots more notations and we hence omit it here. We want to
emphasize here that Theorem \ref{T:5} does not include Theorem \ref{T:1} as
a corollary. It is slightly weaker in the one-variable special case.

Finally we remark that the asymptotic result in Theorem \ref{T:2} seems
harder to generalize. Nevertheless, from Theorem \ref{T:5} one observes
already that for each residue class $\vec{\nu}= p \bmod \vec{s}$ there is a
distinct lower bound $ \mathrm{HS} ({\mathbb{A}},\vec\nu,\vec{s})$. So one
can not expect there is a limit on the generic Newton polygon as $%
p\rightarrow \infty$.

\begin{acknowledgments}
The authors thank Daqing Wan for invaluable communication regarding his
conjecture(s) and for pointing out the reference \cite{Yang} to us. We also
thank Michael Zieve for providing us with an interesting account of references
and history on Dickson polynomials.
\end{acknowledgments}

\end{document}